\numberwithin{equation}{section}
\def\<{\langle}
\def\>{\rangle}
\newtheorem{theorem}{Theorem}[section]
\newtheorem{proposition}[theorem]{Proposition}
\newtheorem{remark}[theorem]{Remark}
\newtheorem{definition}[theorem]{Definition}
\newtheorem{conjecture}[theorem]{Conjecture}
\newcommand{\rr}{\mathbb{R}}
\newcommand{\ii}{\mathfrak{i}}
\newcommand{\bpar}{\overline{\partial}}
\newcommand{\diam}{\text{diam}}
\begin{document}
\title{Remarks on Hessian quotient equations on Riemannian manifolds}
\author{Marcin Sroka}
\address{Faculty of mathematics and computer science, Jagiellonian University, \L ojasiewicza 6, 30-348, Krak\'ow, Poland}
\email{marcin.sroka@uj.edu.pl}
\thanks{The author is very grateful to Pengfei Guan for many fruitful discussions on the subject of this work. Research was supported in part by National Science Center of Poland grant no. 2021/41/B/ST1/01632.}

\begin{abstract} 
We consider Hessian quotient equations in Riemannian setting related to a problem posed by Delano\"e and Urbas. We prove unobstructed second order a priori estimate for the real Hessian quotient equation via the maximum principle argument on Riemannian manifolds in dimension two. This is achieved by introducing new test function and exploiting some fine concavity properties of quotient operator. This result demonstrates that there is intriguing difference between the real case and the complex case, as there are known obstructions for $J$-equation in complex geometry.\end{abstract}
\keywords {fully nonlinear elliptic equations; a priori estimates; Hessian quotient equations; Riemannian manifolds}

\subjclass{58J05; 35R01}
 
\maketitle
\section{Introduction}

Let $(M,g)$ be an $n$ dimensional Riemannian manifold, we consider the following equation
\begin{equation}\label{general}  F(u)=\tilde{F}(\lambda_1,...,\lambda_n)=f\end{equation} for $f \in C^\infty(M)$ where $F(u)$ factors through a symmetric function $\tilde{F}$ of eigenvalues $\lambda_i$ of endomorphism 
\begin{eqnarray*}\label{classicperturbation}  g^{-1} \circ (g + \nabla^2 u) \end{eqnarray*} for $\nabla$ being the Levi-Civita connection of $g$.

Equations of the form (\ref{general}) on Riemannian manifold, to which we refer as a global case, were studied by Delano\"e \cite{D81a, D81b, D84, D03}, Li \cite{L90}, Guan \cite{G14, G24}, Szekelyhidi \cite{Sz18}, Guo and Song \cite{GS24} and many others in a similar set up, built on the seminal work of Caffarelli, Nirenberg and Spruck  \cite{CNS85}. Their complex geometry analogues play a pivotal role for the existence of special Hermitian metrics.

\medskip

The main challenge for (\ref{general}) is the a priori estimates up to second order. It is by now settled in many situations under some \textit{structural conditions} on $F$, including $k$-Hessian operators $F=\sigma_k$ for $k=1,\cdots, n$. But it remains open for the Hessian quotient operators $F=\frac{\sigma_{k}}{\sigma_l}$ for $1\le l<k$. 
That was  signalled out as an open problem by Delano\"e in \cite{D03} and by Urbas in \cite{U02}.

In the case of  operators $\frac{\sigma_n}{\sigma_l}$ for $1 \leq l \leq n-1$ to which we refer as {\it positive Hessian quotient operators}, the zero order estimate follows trivially from the admissibility assumption on the solution and the fact that in this case the admissible cone is the positive one. As such, this estimate is free from dependence on the equation (\ref{general}). This follows from an argument which can be traced back to the work of Cheng and Yau \cite{ChY82}. The first order estimate for a wide class of operators in (\ref{general}) was proven by Urbas in \cite{U02}. The $C^0$ and $C^1$ estimates are collected in Appendix.

\bigskip

The focus of this paper is $C^2$ estimate, we experimentally deal with the operator $\frac{\sigma_2}{\sigma_1}$ in two dimensional case and establish Theorem \ref{secondorderestimate}.  The proof of Theorem \ref{secondorderestimate} relies on a new argument which we hope can be generalized to settle the problem of Delano\"e and Urbas for Hessian quotient operators in higher dimensional case. The proof consists of two main ingredients. The first one is the observation that the positive Hessian quotient operators admit a strong concavity property stated in Proposition \ref{concavityproperty}. This provides crucial term in Proposition \ref{structural}.  
The second ingredient is a new test function introduced in (\ref{newtestfunction}) involving the directional derivative of the solution associated to the eigenvector for largest eigenvalue of Hessian. This is the place at which we have to restrict to dimension two due to computational complexity. 

\medskip

A couple of  remarks should be made here. Second order estimate for equations of the form (\ref{general}) was derived by Guan \cite{G14, G24} and Szekelyhidi \cite{Sz18} under assumption on the existence of certain type of \textit{subsolution}. 
For example $k-$Hessian operators $\sigma_k$ satisfy the assumption. Szekelyhidi used this property to obtain also $C^0$ estimate for $k$-Hessian equations on Riemannian manifolds in \cite{Sz18}, hence solving $k$-Hessian equations on Riemannian manifolds. For the Hessian quotient operators, or even positive Hessian quotient operators, this is not the case if $f$ is not constant in (\ref{general}). In a similar spirit Guo and Song in \cite{GS24}, using the above mentioned conditional estimates from \cite{Sz18}, derived an equivalent condition for existence of solution to (\ref{general}). This condition being existence of yet another type of subsolution defined via inequality involving what Guo and Song call \textit{sup-slope}. Even in dimension two any of the three mentioned works does not show that the second order estimate for (\ref{Jreal}) is unobstructed and that the equations is solvable for any RHS up to the multiplicative constant.

We also note that Theorem \ref{solvability} is in striking difference with the complex case. Namely, an analogue of equation (\ref{Jreal}) when operator (\ref{realJ}) is applied to the perturbation of some K\"ahler form $\chi$ on K\"ahler manifold $(M,g)$ by complex Hessian $\ii \partial \bpar u$ has be a subject of intense study last two decades. In complex geometry this equation is known under the name $J$-equation and was introduced by Donaldson \cite{D99} and Chen \cite{Ch00}. As has been shown, this equation is not always solvable, even for constant RHS, and even on complex surfaces \cite{S20} - so in complex dimension two. Interestingly, as described above, in \cite{GS24} it was proven that the existence of smooth solutions to both real and complex positive Hessian quotient equations on manifolds is equivalent to existence of some sup-slope related subsolution. From our main result, it turns out that in dimension two for the complex equation this is an actual obstruction, while for the real equation this condition is redundant.

\medskip

The paper is organized as follows. In Section 2 we set up notations, 
and state our two main theorems. Then we proceed with proving the lower bound for the linearized operator of (\ref{realJ}) acting on largest eigenvalue of (\ref{generalperturbation}) in  Proposition \ref{structural}. We finish the proof of Theorem \ref{secondorderestimate} in Section 3 by examining the new test function (\ref{newtestfunction}). The proof of Theorem \ref{solvability} is curried over in Section 4. The $C^0$ and $C^1$ estimates are collected in Appendix. 

\section{Preliminaries}

\subsection{Positive Hessian quotient operator $\frac{\sigma_n}{\sigma_{n-1}}$}
Let $(M,g,\chi)$ be a closed, connected, $n$ dimensional Riemannian manifold $(M,g)$ endowed with a smooth, symmetric $(2,0)$ tensor $\chi$. We denote by 
\begin{eqnarray*} \nabla = \nabla^{LC}_g\end{eqnarray*} the Levi-Civita connection of $g$, i.e. the unique torsion free connection such that \begin{eqnarray*} \nabla g = 0.\end{eqnarray*} For later reference we introduce two positive constants $C(\chi,g)$ and $c(\chi,g)$ such that \begin{eqnarray} \label{gbaounds}  -c(\chi,g) g \leq \chi \leq C(\chi,g) g.\end{eqnarray}

For any function $u$ making the tensor positive definite we consider new Riemannian metric
\begin{eqnarray}\label{generalperturbation} \tilde{g}=g_u:= \chi + \nabla^2 u.\end{eqnarray}
Later, in computations, we will use the $\tilde{g}$ notation suppressing, but remembering about, dependence on $u$. 

\begin{definition}\label{def} We call a function $u\in C^2(M)$ admissible, if $g_u$ in (\ref{generalperturbation}) is positive definite.\end{definition} Together with the background $g$ the tensor $g_u$ gives rise to the endomorphism
\begin{eqnarray*} \label{endo}  g^{-1} \circ g_u : TM \longrightarrow T^*M \longrightarrow TM.\end{eqnarray*}

Consider the particular class of equations, slightly more general though then (\ref{general}) for this quotient operator, in terms of eigenvalues of $g^{-1} \circ g_u$:
\begin{eqnarray}\label{Jreal}  \begin{cases} F(u):=\frac{\sigma_n}{\sigma_{n-1}}\left(\lambda_i(g^{-1} \circ g_u)\right) = \frac{\sigma_n}{\sigma_{n-1}}(\lambda_u) = f, \\ u \text{ is admissible}\end{cases}\end{eqnarray} for a given positive $f \in C^\infty(M)$ where 
\begin{eqnarray*}\label{eigen}  \lambda:= \lambda_u = \left(\lambda_1(g^{-1} \circ g_u),...,\lambda_n(g^{-1} \circ g_u)\right)\end{eqnarray*} is decreasingly ordered vector of eigenvalues of $g^{-1} \circ g_u$ and again in calculations we use the notation $\lambda$ suppressing dependence on $u$. 

Because of the dimensional restriction in our main result - Theorem \ref{secondorderestimate} - in (\ref{Jreal}) we are interested only in this particular positive Hessian quotient operator. We denote this operator by $F$ from now on to shorten the notation. For later use we rewrite 
\begin{eqnarray} \label{realJ} \begin{gathered} F(\lambda)=\frac{\sigma_n}{\sigma_{n-1}}(\lambda) = \frac{1}{\sigma_1\left(\frac{1}{\lambda_1},...,\frac{1}{\lambda_n}\right)}:= \frac{1}{\sigma_1\left(\lambda^{-1}\right)}   \end{gathered}\end{eqnarray} where by definition 
\begin{eqnarray*} \lambda^{-1}:= \left(\frac{1}{\lambda_1},...,\frac{1}{\lambda_n}\right) \end{eqnarray*} and recall that
\begin{eqnarray*}  \sigma_k(\lambda) = \sum_{1 \leq i_1<...<i_k \leq n}\lambda_{i_1}\cdot ... \cdot \lambda_{i_k}\end{eqnarray*}
is the classic $k-$Hessian operator for $1 \leq k \leq n$. 

Our main theorem which we prove in Section 3 is the second order estimate for (\ref{realJ}).

\begin{theorem} \label{secondorderestimate}
Let $(M,g,\chi)$ be as described above and $\dim M = 2$ then there exists a constant $C$ depending only on $f$ and the background data $g$ and $\chi$ such that any solution to (\ref{Jreal}) satisfies \begin{eqnarray}\label{nabla2est}  |\nabla^2 u|_{g} \leq C.\end{eqnarray}
\end{theorem}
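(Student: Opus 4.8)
The plan is to run a maximum principle argument on a suitable test function that measures the largest eigenvalue of $\lambda_u$. Since $\dim M = 2$, the control $|\nabla^2 u|_g \le C$ is equivalent to a two-sided bound on the eigenvalues $\lambda_1 \ge \lambda_2$ of $g^{-1}\circ g_u$. The lower bound $\lambda_2 > 0$ is the admissibility assumption, and combined with the equation $F(\lambda) = \lambda_1\lambda_2/(\lambda_1+\lambda_2) = f > 0$ this already forces $\lambda_2 \ge f$ pointwise (since $F(\lambda) \le \lambda_2$), so it suffices to bound $\lambda_1$ from above. Because $\lambda_1\lambda_2 = f(\lambda_1+\lambda_2)$, an upper bound on $\lambda_1$ is in turn equivalent to an upper bound on $\sigma_1(\lambda) = \lambda_1 + \lambda_2 = \Delta_g u + \tr_g \chi$, so the whole theorem reduces to bounding $\lambda_1$ (equivalently $\Delta u$) at its maximum point.

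The key steps, in order, are as follows. First I would invoke Proposition \ref{structural}, which gives a lower bound for the linearized operator $L := F^{ij}\nabla_i\nabla_j$ applied to $\lambda_1$ (the largest eigenvalue of $g_u$), the estimate being driven by the strong concavity of $\sigma_n/\sigma_{n-1}$ from Proposition \ref{concavityproperty}; this supplies the crucial positive "good third-order" term. Second, rather than the classical test function $\log \lambda_1 + \varphi(|\nabla u|^2) + \psi(u)$, I would use the new test function from (\ref{newtestfunction}), namely one built from $\lambda_1$ together with the first directional derivative of $u$ along the unit eigenvector $e_1$ for $\lambda_1$ (plus the usual auxiliary terms in $|\nabla u|^2$ and $u$). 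The reason for this choice — and the reason the argument is currently confined to $n=2$ — is that differentiating $\lambda_1$ twice produces bad third-order terms $\sum F^{ii}(\nabla_1 \tilde g_{ii})^2/\lambda_1$ type contributions that, in general, are not dominated; incorporating the directional derivative term and using the equation's differentiated form lets one cancel or absorb exactly these terms. Third, at an interior maximum $x_0$ of the test function, I would choose normal coordinates diagonalizing $g_u$ at $x_0$, write out the first- and second-derivative conditions, substitute the differentiated equation ($\nabla_1 f = F^{ii}\nabla_1 \tilde g_{ii}$ and its second derivative), and commute covariant derivatives, picking up curvature terms of $g$ that are controlled by the background data. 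Fourth, I would combine the concavity term from Proposition \ref{structural} with the cancellations afforded by the new test function to show that all third-order terms are handled, leaving an inequality in which $\lambda_1(x_0)$ can appear at most linearly on the "bad" side while the "good" side grows faster, yielding $\lambda_1(x_0) \le C$. Finally I would transfer this bound from $x_0$ to all of $M$ via the structure of the test function and the already-established $C^0$ and $C^1$ bounds from the Appendix.

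The main obstacle I expect is Step 2–Step 4: correctly engineering the auxiliary test function so that the directional-derivative term, upon differentiation, generates precisely the cross terms needed to cancel the uncontrolled third-order quantities coming from $\nabla^2 \lambda_1$, all while the new term's own second derivatives (which involve $\nabla e_1$, hence again $\nabla^3 u$ and the spectral gap $\lambda_1 - \lambda_2$) remain manageable. In dimension two the eigenvector $e_1$ is essentially determined by a single angle and $\lambda_1 - \lambda_2 = \sqrt{(\Delta u + \tr\chi)^2 - 4f(\Delta u + \tr \chi)}$ is controlled once $\lambda_1$ is large, so the computation closes; the bookkeeping of these cancellations, together with keeping every error term proportional to a controllable power of $\lambda_1$, is the delicate heart of the proof. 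A secondary technical point is handling the possible non-smoothness of $\lambda_1$ where eigenvalues collide, for which I would use the standard device of perturbing to a smooth symmetric function of the full spectrum (e.g. working with $\sigma_1$ or a regularized maximal eigenvalue) that agrees with $\lambda_1$ to the needed order at $x_0$.
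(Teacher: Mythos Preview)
Your outline tracks the paper's strategy closely---reduce to bounding $\lambda_1$, invoke Proposition~\ref{structural}, use the test function (\ref{newtestfunction}) built on $u_V$, and run the maximum principle---but several specifics are off, and one of them is the key mechanism that makes $n=2$ work. First, the paper's test function is simpler than you propose: it is just $\log\lambda_1+\phi(\tfrac12 u_V^2)$ with $\phi$ \emph{linear} ($\phi''=0$, $\phi'=A$ a large constant); there are no auxiliary $|\nabla u|^2$ or $u$ terms. Second, the structural term $F^{11}\tilde g_{11,1}^2/\lambda_1^2$ from Proposition~\ref{structural} is not used to absorb stray third-order quantities; rather, via the critical equation $Q_1=0$ it is rewritten as $F^{11}\phi'^2 u_1^2(\tilde g_{11}+\cdots)^2\ge \tfrac{F^2}{2}\phi'^2 u_1^2$, and this together with a term $\tfrac{F^2}{2}\phi'$ coming from $L_F\phi(\tfrac12 u_V^2)$ dominates the total error $C+C\phi'|u_1|$ once $A$ is large. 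The final contradiction is thus a quadratic inequality in $|u_1|$, not a comparison of powers of $\lambda_1$.

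The idea you are missing is \emph{why} the computation closes in dimension two. It is not a cancellation in the sense you describe. At the maximum $x_0$, the two critical equations $Q_i=0$ (which involve $\tilde g_{11,i}$ and the eigenvector derivatives $V^2_i=\tilde g_{12,i}/(\lambda_1-\lambda_2)$) together with the two once-differentiated PDE relations $F^{11}\tilde g_{11,i}+F^{22}\tilde g_{22,i}=f_i$ form a $4\times 4$ linear system that can be \emph{explicitly solved} for $\tilde g_{11,1},\tilde g_{11,2},\tilde g_{22,1},\tilde g_{22,2}$, yielding the concrete bounds $|\tilde g_{11,1}|\le C\phi'\lambda_1^2$, $|\tilde g_{11,2}|\le C\phi'\lambda_1$, $|\tilde g_{22,1}|\le C\phi'$, $|\tilde g_{22,2}|\le C$. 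These in turn control all derivatives of $V$ and reduce every remaining third- and fourth-order term in $L_FQ$ to size $\le C\phi'|u_1|$. This explicit solvability (four equations, four unknowns) is the genuine two-dimensional feature; your ``cancellation'' and ``$\lambda_1$ linear on the bad side'' picture does not capture it and would not guide you to the right estimates. Finally, eigenvalue collision is handled more simply than you suggest: since $\lambda_2\le 2f$ follows directly from the equation, one may assume $\lambda_1\gg\lambda_2$ at $x_0$ (else $\lambda_1$ is already bounded), so $\lambda_1$ is simple there and $V$ extends smoothly without any perturbation device.
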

This together with a priori estimates of Appendix and standard ellipticity theory gives the following existence of smooth solutions to equation (\ref{Jreal}) as we demonstrate in Section 4.
\begin{theorem}\label{solvability}
Let $(M,g,\chi)$ be as described above and $\dim M = 2$, if there exists at least one admissible $v \in C^\infty(M)$ then for any $\Psi \in C^\infty(M)$ there exists a unique admissible $u \in C^\infty(M)$ solving 
\begin{eqnarray} \label{Jrealintegral}  F(u)= e^{\int_M u \: vol_g + \Psi}.\end{eqnarray} Equivalently, (\ref{Jreal}) can be solved for any positive $f \in C^\infty(M)$ up to the positive constant multiplication of $f$.
\end{theorem}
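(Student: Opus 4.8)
\emph{Proof sketch.} The plan is to run the method of continuity on the normalized equation \eqref{Jrealintegral}, for which the analytic heavy lifting --- the second order estimate of Theorem \ref{secondorderestimate} and the lower order estimates of the Appendix --- is already in place. Fix the given admissible $v$ and set $\Psi_0:=\log F(v)-\int_M v\,vol_g$, which is smooth since admissibility forces $\lambda_v>0$ componentwise and hence $F(v)>0$; thus $v$ solves \eqref{Jrealintegral} with $\Psi$ replaced by $\Psi_0$. For $t\in[0,1]$ put $\Psi_t:=(1-t)\Psi_0+t\Psi$ and let $S\subseteq[0,1]$ be the set of parameters $t$ admitting an admissible $u_t\in C^\infty(M)$ with $F(u_t)=e^{\int_M u_t\,vol_g+\Psi_t}$. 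Then $0\in S$, and since $[0,1]$ is connected it suffices to prove that $S$ is open and closed; the case $t=1$ then gives the asserted solution, whose uniqueness I treat at the end.

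For openness, I would linearize $w\mapsto F(w)-e^{\int_M w\,vol_g+\Psi_t}$ at a solution $u_t$, which gives the operator
\begin{equation*}
\mathcal L\phi=L\phi-\Big(e^{\int_M u_t\,vol_g+\Psi_t}\Big)\int_M\phi\,vol_g,\qquad L:=\frac{\partial F}{\partial(g_{u_t})_{ij}}\nabla^2_{ij}.
\end{equation*}
Here $L$ is linear, uniformly elliptic because $\lambda_{u_t}>0$, and has no zeroth order term. On the closed connected $M$, $L$ is Fredholm of index zero and, by the strong maximum principle, has kernel the constant functions; as $\mathcal L$ differs from $L$ by a rank one operator it is still Fredholm of index zero. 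If $\mathcal L\phi=0$ then $L\phi$ equals the positive function $e^{\int_M u_t\,vol_g+\Psi_t}$ times the constant $\int_M\phi\,vol_g$; testing at an interior maximum and at an interior minimum of $\phi$ forces $\int_M\phi\,vol_g=0$, hence $L\phi=0$, hence $\phi$ is constant, hence $\phi\equiv0$. So $\mathcal L\colon C^{2,\alpha}(M)\to C^{0,\alpha}(M)$ is an isomorphism, and the implicit function theorem --- together with elliptic regularity to return to $C^\infty$ and the openness of the admissibility condition --- shows $S$ is open.

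For closedness, I would take $t_i\to t_\ast$ with solutions $u_{t_i}$ and make all a priori bounds uniform. Integrating the equation and using $F(\lambda_{u})=\big(\sum_j\lambda_{j}^{-1}\big)^{-1}\le\tfrac1n\sigma_1(\lambda_u)=\tfrac1n\tr\big(g^{-1}g_u\big)$ with $\int_M\Delta u_{t_i}\,vol_g=0$ bounds $\int_M u_{t_i}\,vol_g$ from above; the zero order bound of the Appendix, which depends only on $g$ and $\chi$ through admissibility, controls $\mathrm{osc}_M u_{t_i}$, and feeding this back into the equation bounds $\int_M u_{t_i}\,vol_g$ from below. Hence $\|u_{t_i}\|_{C^0}\le C$ and $f_{t_i}:=e^{\int_M u_{t_i}\,vol_g+\Psi_{t_i}}$ satisfies $0<c_0\le f_{t_i}\le C$; the $C^1$ estimate of the Appendix then gives $\|u_{t_i}\|_{C^1}\le C$ and Theorem \ref{secondorderestimate} gives $\|u_{t_i}\|_{C^2}\le C$, while $\lambda_{\min}(g^{-1}g_{u_{t_i}})\ge F(u_{t_i})=f_{t_i}\ge c_0$ keeps $g_{u_{t_i}}\ge c_0\,g$ uniformly. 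Since $F$ is, up to the factor $n$, the harmonic mean of the $\lambda_j$ and so concave on the positive cone, Evans--Krylov upgrades this to a uniform $C^{2,\alpha}$ bound, and Schauder bootstrapping to uniform $C^{k,\alpha}$ bounds for all $k$; a subsequence converges in $C^\infty$ to an admissible smooth solution at $t_\ast$, so $S$ is closed and therefore $S=[0,1]$.

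Finally, for uniqueness, given two solutions $u,u'$ of \eqref{Jrealintegral} I would examine $w:=u-u'$: at an interior maximum $\nabla^2 w\le0$ gives $g_u\le g_{u'}$, hence $\lambda_u\le\lambda_{u'}$ there, and since $F$ is increasing in each eigenvalue this yields $F(u)\le F(u')$ at that point, forcing $\int_M u\,vol_g\le\int_M u'\,vol_g$; the reverse inequality comes from an interior minimum, so the two integrals agree and $F(u)\equiv F(u')$. Writing the difference of the equations as $Lw=0$ for a linear elliptic operator without zeroth order term and applying the maximum principle gives $w\equiv\mathrm{const}$, whence $w\equiv0$. The equivalence with solving \eqref{Jreal} for arbitrary positive $f$ up to a positive multiplicative constant is then immediate by taking $\Psi=\log f$. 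The only genuinely non-routine point of this scheme is making every bound uniform along the path, and concretely it is the lower bound for the normalizing constant $\int_M u_t\,vol_g$: Theorem \ref{secondorderestimate} allows its constant to depend on $f$, and the equation degenerates as $\inf f\to0$, so without such a lower bound the uniform $C^2$ bound along the path is not automatic.
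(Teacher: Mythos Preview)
Your overall strategy---continuity method plus the a priori estimates---is the same as the paper's, but the implementation differs in two respects. First, the paper begins by absorbing the admissible $v$ into $\chi$ (replacing $\chi$ by $\chi+\nabla^2 v$), thereby reducing to the case $\chi>0$, and then runs the path $\log F(u_t)=\int_M u_t\,vol_g+t\Psi$ with a constant initial solution $u_0$; you instead keep $\chi$ as is and interpolate $\Psi_t=(1-t)\Psi_0+t\Psi$ with $v$ as the initial solution. Both set-ups are fine. Second, for the two-sided bound on $\int_M u_t\,vol_g$ the paper argues pointwise at the extrema of $u_t$ (after the reduction $\chi>0$, at $\min u_t$ one has $g_{u_t}\ge\chi>0$, hence $F(u_t)\ge F(0)$), which immediately gives both bounds; you integrate the inequality $F\le\tfrac1n\sigma_1$ for the upper bound, which works since $\int_M\tr(g^{-1}\chi)\,vol_g>0$ follows from the existence of an admissible $v$.

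The one step that is not yet justified in your sketch is the \emph{lower} bound on $\int_M u_t\,vol_g$: knowing only $\mathrm{osc}\,u_t\le C$ and an upper bound on $\int_M u_t$ does not by itself prevent $\int_M u_t\to-\infty$, and ``feeding this back into the equation'' is not a proof. The clean fix in your framework is to compare with $v$: at a minimum of $u_t-v$ one has $g_{u_t}\ge g_v>0$, hence $F(u_t)\ge\min_M F(v)>0$ there, which gives $\int_M u_t\,vol_g\ge\log\min_M F(v)-\max_M\Psi_t$. This is exactly what the paper's reduction to $\chi>0$ accomplishes (with $v\equiv0$ after the reduction), and you rightly flag at the end that without this lower bound the uniform $C^2$ estimate is not available. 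Once this is made explicit your argument is complete, and your more detailed treatment of openness and uniqueness goes beyond the paper's brief references to Schauder theory and ``standard property''.
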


\subsection{The structural term for $\lambda_1$}

For the proof of Theorem \ref{secondorderestimate} we need the following technical estimate - Proposition \ref{structural} - which by using the extremal equation in the maximum principle argument will allow us to produce very useful term. Before stating the proposition we set up the notation, some preliminary calculations and the choice of some special coordinates which we always us whenever arguing at some point on $M$.

At the point of consideration $x_0 \in M$ we always choose normal coordinates so that:
\begin{eqnarray} \label{eigenbasic}  \begin{gathered} 
g_{ij}=\delta^i_j, \\ 
\tilde{g}_{ij} = \chi_{ij} + u_{ij}, \\ 
\tilde{g}_{ij}=\tilde{g}_{ii} \delta^i_j, \\ 
\lambda_i = \tilde{g}_{ii}, \\ 
\lambda_1 \geq \lambda_2 \geq ... \geq \lambda_n
\end{gathered} \end{eqnarray}
where everywhere \begin{eqnarray}w_{ijkl...} = \nabla_{...}\nabla_{l}\nabla_{k}\nabla_{j}\nabla_{i}w\end{eqnarray} for any function $w \in C^\infty(M)$ and in the calculations we use covariant derivatives. 

In particular, cf. \cite{U02}, we have 
\begin{eqnarray*} \label{curvature}  \begin{gathered} 
u_{ijk} = u_{kij}+C_{ijkl}u_l, \\ 
u_{ijkl}=u_{klij} + C_{ijkm}u_{ml}+C_{ijml}u_{mk}+C_{imkl}u_{mj}+C_{mjkl}u_{mi}+C_{ijklm}u_m \end{gathered}\end{eqnarray*} for $C$ being an universal symbol for any curvature dependent quantity.

In the coordinates (\ref{eigenbasic}) at $x_0 \in M$:
\begin{eqnarray}\label{eigenbasic2} \begin{gathered}  
F^{ii} = F^2 \frac{1}{\lambda_i^2}, \\ 
\frac{1}{n} = \frac{1}{n} \frac{\left(\sum_i 1 \cdot \frac{1}{\lambda_i}\right)^2}{\left(\sum_k \frac{1}{\lambda_i} \right)^2} \leq \mathcal{F}:= \sum_i F^{ii}= \frac{\sum_i \frac{1}{\lambda_i^2}}{\left(\sum_k \frac{1}{\lambda_i} \right)^2}\leq \frac{\left(\sum_i \frac{1}{\lambda_i}\right)^2}{\left(\sum_k \frac{1}{\lambda_i} \right)^2}=1, \\
F^{nn} \geq ... \geq F^{11}, \\ 
F^{nn} \geq \frac 1 n \mathcal{F} \geq F^{11}. 
\end{gathered} \end{eqnarray}

\begin{proposition}\label{structural}
For any solution $u$ of equation (\ref{Jreal}) and any $x_0 \in M$ in the coordinates (\ref{eigenbasic}) the following inequality holds 
\begin{eqnarray} \label{structuraltermestimate} L_F\left(\log(\lambda_1)\right)(x_0) \geq F^{11}(\lambda(x_0)) \frac{\tilde{g}_{11,1}^2(x_0)}{\lambda_1^2(x_0)}-C,\end{eqnarray} where $\lambda_1$ stands for the largest eigenvalue of (\ref{generalperturbation}) and is assumed to be sufficiently large, $L_F$ denotes the linearisation of $F$ at $u$ and $C$ depends on the same data as in Theorem \ref{secondorderestimate}.
\end{proposition}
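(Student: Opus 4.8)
The plan is to run the standard maximum principle computation for $\log\lambda_1$, but organized so that the concavity of $F$ (specifically the strong concavity in Proposition \ref{concavityproperty}) absorbs all the dangerous third-order terms except for the one that we want to keep, namely $F^{11}\tilde g_{11,1}^2/\lambda_1^2$. First I would differentiate the equation $F(\lambda)=f$ once and twice in the direction $e_1$ at $x_0$, using the formulas for first and second derivatives of eigenvalues of a symmetric endomorphism. Recall that when $\lambda_1$ is a simple eigenvalue the first derivative is $\partial_1\lambda_1=\tilde g_{11,1}$ and the second derivative of $\lambda_1$ picks up the usual sum $\sum_{p>1}\frac{2\tilde g_{1p,1}^2}{\lambda_1-\lambda_p}$ (in dimension two this is just the single term $\frac{2\tilde g_{12,1}^2}{\lambda_1-\lambda_2}$), which is the good term that helps. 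Then

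\begin{equation*}
L_F(\log\lambda_1)=\frac{1}{\lambda_1}\sum_i F^{ii}\nabla_i\nabla_i\lambda_1-\frac{1}{\lambda_1^2}\sum_i F^{ii}(\nabla_i\lambda_1)^2,
\end{equation*}

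and I would commute derivatives in $\nabla_i\nabla_i\lambda_1$ to turn $\tilde g_{ii,11}$ into $\tilde g_{11,ii}$ plus curvature terms (these are the $C$ terms using the commutation identities quoted after \eqref{eigenbasic}, controlled by $\mathcal F\le 1$ and the $C^1$ bound on $u$). Differentiating $F=f$ twice gives $\sum_i F^{ii}\tilde g_{ii,11}=-\sum_{i,j,p,q}F^{ij,pq}\tilde g_{ij,1}\tilde g_{pq,1}+\nabla_1\nabla_1 f$, so that

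\begin{equation*}
L_F(\log\lambda_1)\ \ge\ -\frac{1}{\lambda_1}\sum F^{ij,pq}\tilde g_{ij,1}\tilde g_{pq,1}+\frac{1}{\lambda_1}\sum_{p>1}\frac{2F^{ii}\tilde g_{1p,1}^2}{\lambda_1-\lambda_p}-\frac{1}{\lambda_1^2}F^{11}\tilde g_{11,1}^2-C,
\end{equation*}

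where I have already dropped the diagonal $i=1$ contributions to the last sum and kept only $i>1$ in the positive third-order eigenvalue term (legitimate since $F^{ii}>0$), discarding the $-\frac1{\lambda_1^2}\sum_{i>1}F^{ii}\tilde g_{ii,1}^2$ piece — wait, that piece has the wrong sign, so instead I retain it and will need the second-derivative term and concavity to beat it.

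The heart of the matter, and the place where Proposition \ref{concavityproperty} enters, is showing that

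\begin{equation*}
-\frac{1}{\lambda_1}\sum_{i,j,p,q} F^{ij,pq}\tilde g_{ij,1}\tilde g_{pq,1}+\frac{2}{\lambda_1}\sum_{p>1}\frac{F^{ii}\tilde g_{1p,1}^2}{\lambda_1-\lambda_p}-\frac{1}{\lambda_1^2}\sum_{i>1}F^{ii}\tilde g_{ii,1}^2\ \ge\ \frac{?}{}\ -C,
\end{equation*}

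i.e. that after keeping the single surviving term $F^{11}\tilde g_{11,1}^2/\lambda_1^2$ on the right of \eqref{structuraltermestimate}, everything else is bounded below. I would use the classical decomposition of $-\sum F^{ij,pq}\eta_{ij}\eta_{pq}$ into the "diagonal" part $-\sum_{i,j}\frac{\partial^2 F}{\partial\lambda_i\partial\lambda_j}\eta_{ii}\eta_{jj}$ (nonnegative by concavity of $F$ on the positive cone) plus the "off-diagonal" part $-2\sum_{i<j}\frac{F^{ii}-F^{jj}}{\lambda_i-\lambda_j}\eta_{ij}^2$. Since $F^{nn}\ge\cdots\ge F^{11}$ and $\lambda_1\ge\cdots\ge\lambda_n$ we have $\frac{F^{ii}-F^{jj}}{\lambda_i-\lambda_j}\le 0$ for $i<j$, so the off-diagonal part is itself $\ge 0$ and in fact $\ge \frac{2(F^{11}-F^{pp})}{\lambda_1-\lambda_p}\tilde g_{1p,1}^2$ for the terms with $i=1$; combined with the explicit $\frac{2F^{ii}\tilde g_{1p,1}^2}{\lambda_1-\lambda_p}$ coming from the eigenvalue Hessian, these terms have a definite positive sign and I can simply throw them away (in $n=2$ there is just the single index $p=2$ and this is transparent). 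What remains to control is the diagonal piece, and here I invoke the strong concavity in Proposition \ref{concavityproperty}: it should give a lower bound of the form $-\sum_{i,j}F^{ij}\tilde g_{ii,1}\tilde g_{jj,1}\ge c\,F\sum_i\frac{\tilde g_{ii,1}^2}{\lambda_i^2}$ or similar (using $F^{ii}=F^2/\lambda_i^2$), and this dominates the stray term $\frac{1}{\lambda_1^2}\sum_{i>1}F^{ii}\tilde g_{ii,1}^2=\frac{1}{\lambda_1^2}\sum_{i>1}\frac{F^2\tilde g_{ii,1}^2}{\lambda_i^2}$ once $\lambda_1$ is large, because of the extra $\lambda_1^{-2}$. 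The main obstacle I anticipate is precisely pinning down the exact constant and form of the strong-concavity inequality of Proposition \ref{concavityproperty} and verifying it beats the bad term with room to spare for large $\lambda_1$; the curvature and $C^1$ error terms, the first-derivative reshuffling via the commutation formulas, and the $n=2$ bookkeeping are all routine given the bounds already collected in \eqref{eigenbasic2} and the Appendix estimates. Once the diagonal concavity term absorbs the bad term and the off-diagonal/eigenvalue-Hessian terms are discarded as nonnegative, collecting constants yields exactly \eqref{structuraltermestimate}.
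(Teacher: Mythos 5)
Your general framework (BCD eigenvalue inequality, differentiate the equation twice, split $-F^{ij,pq}\tilde g_{ij,1}\tilde g_{pq,1}$ into diagonal and off-diagonal parts, appeal to the strong concavity of Proposition \ref{concavityproperty}) matches the paper's, but there is a genuine index error that derails the rest of the argument.

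The quadratic term coming from the concavity of $\log$ is
\begin{equation*}
-\frac{1}{\lambda_1^2}\sum_i F^{ii}\bigl(\nabla_i\lambda_1\bigr)^2
=-\frac{1}{\lambda_1^2}\sum_i F^{ii}\,\tilde g_{11,i}^2,
\end{equation*}
i.e.\ it involves the $(1,1)$-component of $\tilde g$ differentiated in direction $i$, \emph{not} $\tilde g_{ii,1}$. In your displayed chain you keep only $-F^{11}\tilde g_{11,1}^2/\lambda_1^2$ and later discuss a "stray term'' $\frac{1}{\lambda_1^2}\sum_{i>1}F^{ii}\tilde g_{ii,1}^2$: that is not the term you have to kill. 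The dangerous piece for $i>1$ is $-F^{ii}\tilde g_{11,i}^2/\lambda_1^2$, which after commuting derivatives is (up to lower-order errors) $-F^{ii}\tilde g_{i1,1}^2/\lambda_1^2$ — an \emph{off-diagonal} component. The diagonal part of the concavity expansion, which controls $\tilde g_{ii,1}^2$, cannot touch it. So the claim that the diagonal concavity "dominates the stray term'' fails, because you are matching the wrong indices.

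Consequently the off-diagonal concavity terms with one index equal to $1$ cannot be "simply thrown away''; they are precisely what defeats the bad term. The paper's key computation is the exact identity
\begin{equation*}
\frac{F^{ii}-F^{11}}{\lambda_1(\lambda_1-\lambda_i)} = \frac{F^{ii}}{\lambda_1^2}\Bigl(1+\frac{\lambda_i}{\lambda_1}\Bigr)\ \ge\ \frac{F^{ii}}{\lambda_1^2}\quad (i>1),
\end{equation*}
which shows that the off-diagonal concavity piece $2\sum_{i>1}\frac{F^{ii}-F^{11}}{\lambda_1(\lambda_1-\lambda_i)}\tilde g_{i1,1}^2$ carries a coefficient at least $2F^{ii}/\lambda_1^2$, i.e.\ twice the coefficient $F^{ii}/\lambda_1^2$ of the bad term. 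One then commutes $\tilde g_{11,i}\to\tilde g_{i1,1}$ using \eqref{curvature} and uses a small-$\epsilon$ absorption to land at $III\ge -C$. Finally, the role of the strong concavity \eqref{fineconcavity} is not to beat this bad term but to produce the positive diagonal piece $2F^{ii}\tilde g_{ii,1}^2/(\lambda_i\lambda_1)$ whose $i=1$ contribution supplies the surviving $F^{11}\tilde g_{11,1}^2/\lambda_1^2$, while its negative piece $-2(F^{ii}\tilde g_{ii,1})^2/(F\lambda_1)$ is harmless because $F^{ii}\tilde g_{ii,1}=f_1$ is bounded by the once-differentiated equation \eqref{PDEder}. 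You should revisit the expansion of $L_F(\log\lambda_1)$ with the correct component $\tilde g_{11,i}$, commute it to $\tilde g_{i1,1}$, and keep rather than discard the off-diagonal concavity terms together with the identity \eqref{fii-f11}.
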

\begin{proof}
In the following we compute at $x_0$ in coordinates (\ref{eigenbasic}) but we suppress the dependence of $x_0$.  Consider the quantity
\begin{eqnarray}\label{testqu}  W = \log \lambda_1.\end{eqnarray} 
Then
\begin{eqnarray}\label{secondder} \begin{gathered} W_{ii} = \frac{\lambda_{1,ii}}{\lambda_1} - \frac{\lambda_{1,i}^2}{\lambda_1^2}.\end{gathered}\end{eqnarray}

For the first term in (\ref{secondder}) we have the well known estimate, cf. \cite{BCD17}:
\begin{eqnarray}\label{eigensecder}  \begin{gathered} 
\frac{\lambda_{1,ii}}{\lambda_1} \geq \
\frac{1}{\lambda_1} \left( \tilde{g}_{11,ii} + \sum_{j>1} \frac{\tilde{g}_{j1,i}^2+\tilde{g}_{1j,i}^2}{\lambda_1-\lambda_j} \right)
= \frac{1}{\lambda_1} \left( \chi_{11,ii}+u_{11ii} + 2 \sum_{j>1} \frac{\tilde{g}_{1j,i}^2}{\lambda_1-\lambda_j} \right) \\
\geq 
\frac{1}{\lambda_1} \left( \chi_{ii,11}+u_{ii11} + 2 \sum_{j>1} \frac{\tilde{g}_{1j,i}^2}{\lambda_1-\lambda_j} \right) - \frac{C \lambda_1+C|\nabla u|_g+(\chi_{11,ii}-\chi_{ii,11})}{\lambda_1}
\\ \geq \frac{1}{\lambda_1} \left( \tilde{g}_{ii,11} + 2 \sum_{j>1} \frac{\tilde{g}_{1j,i}^2}{\lambda_1-\lambda_j} \right) - C. \end{gathered} \end{eqnarray}
In the above, last inequality follows from Proposition \ref{first}, while the previous one from (\ref{curvature}).

From differentiating equation (\ref{Jreal}) twice in direction $\partial_1$ we get consecutively:
\begin{eqnarray}\label{PDEder} \begin{gathered} 
F^{ii}\tilde{g}_{ii,1}=f_1, \\ 
F^{ii}\tilde{g}_{ii,11}=-F^{ij,rs}\tilde{g}_{ij,1}\tilde{g}_{rs,1}+f_{11}. \end{gathered}\end{eqnarray}
Another standard calculation, cf. \cite{A94}, shows that in coordinates (\ref{eigenbasic}):
\begin{eqnarray}\label{secondderexpr} \begin{gathered} F^{ij,rs}\tilde{g}_{ij,1} \tilde{g}_{rs,1}=F^{ii,jj}\tilde{g}_{ii,1}\tilde{g}_{jj,1}+\sum_{i \not = j} \frac{F^{ii}-F^{jj}}{\lambda_i-\lambda_j}\tilde{g}_{ij,1}^2 \\
=F^{ii,jj}\tilde{g}_{ii,1}\tilde{g}_{jj,1}+2 \sum_{i > j} \frac{F^{ii}-F^{jj}}{\lambda_i-\lambda_j}\tilde{g}_{ij,1}^2.\end{gathered}\end{eqnarray}

The second summand in (\ref{secondderexpr}) is known to have a sign, e.g. from (\ref{eigenbasic2}). It is of particular interest to have an explicit
lower bound. The following holds for general positive Hessian quotient operator $F=\frac{\sigma_n}{\sigma_k}$. The proof in \cite{GS} relies on arguments in \cite{GM03, GLZ09}.
\begin{proposition} \label{concavityproperty-k} For $F(\tilde g)=\frac{\sigma_n}{\sigma_k}(\tilde g)$, suppose $\tilde g>0$ which is diagonal at the point with eigenvalues $\lambda_1\ge \lambda_2\ge \cdots, \lambda_n>0$,  then there exist two positive constants $ \epsilon_0, \ \delta_0$ depending only on $n, k$ such that for any $\xi_{ij}$, $1\leq i,j \leq n$:
\begin{eqnarray*}\label{fineconcavity-k} 
- F^{\alpha\beta,\gamma \eta}\xi_{\alpha \beta}\xi_{\gamma\eta} &\ge &
 (1+\epsilon_0) F^{11} \frac{\xi_{11}^2}{\lambda_1} +\frac{1}{2} \sum_{i\ge 2} F^{ii}\frac{\xi^2_{ii}}{\lambda_i}+(1+\delta_0)\sum_{i\ge 2}F^{ii}\frac{\xi^2_{i1}}{\lambda_1}\nonumber \\
 && +\sum_{i, j\ge 2, i\neq j}F^{ii}\frac{\xi^2_{ij}}{\lambda_j}- \frac{\left(F^{ii}\xi_{ii}\right)^2}{F}.\end{eqnarray*}
\end{proposition}

As we deal only with the special operator $F=\frac{\sigma_n}{\sigma_{n-1}}$ here, the following is easy to prove.
\begin{proposition} \label{concavityproperty}
In the setting of Proposition \ref{structural} for 
\begin{eqnarray*}F(\lambda)=\frac{1}{\sigma_{1}(\lambda^{-1})}\end{eqnarray*} 
and $\xi_{i} \in \rr$ for $1 \leq i \leq n$, we have
\begin{eqnarray}\label{fineconcavity}  \begin{gathered} 
- F^{ii,jj}\xi_{i}\xi_{j} = 
 2F^{ii} \frac{\xi_{i}^2}{\lambda_i} - 2\frac{\left(F^{ii}\xi_i\right)^2}{F}.
\end{gathered}\end{eqnarray}
\end{proposition}
\begin{proof}
We denote $\kappa_i : = \frac{1}{\lambda_i}$, then one observes that 
\begin{eqnarray} F(\lambda)=\frac{1}{\sigma_1(\kappa)}.\end{eqnarray}

Computing first and second order derivatives we get:
\begin{eqnarray}\label{Fder} \begin{gathered} 
F^{ii}(\lambda) = -\frac{\frac{\sigma_{1}}{\partial \kappa_i}(\kappa)}{\sigma_1^2(\kappa)} \frac{\partial \kappa_i}{\partial \lambda_i}(\lambda)=\frac{\sigma_{0}(\kappa|i)}{\sigma_1^2(\kappa)} \cdot \frac{1}{\lambda_i^2}=F^2\frac{1}{\lambda_i^2}, \\ 
F^{ii,jj}(\lambda)=2F^3\frac{1}{\lambda_i^2\lambda_j^2}-2\delta^i_jF^2\frac{1}{\lambda_i^3}.
\end{gathered}\end{eqnarray}
Applying the above shows the following 
\begin{eqnarray}\begin{gathered} -F^{ii,jj}(\lambda)\xi_i\xi_j=-2\frac{1}{F}F^4\frac{\xi_i \xi_j}{\lambda_i^2\lambda_j^2}+2F^2\frac{1}{\lambda_i^3}\xi_i^2\\
=2F^{ii} \frac{\xi_i^2}{\lambda_i}-2\frac{(F^{ii}\xi_i)^2}{F} \end{gathered}\end{eqnarray}
thus the claimed equality (\ref{fineconcavity}) follows.
\end{proof}

Coming back to the proof of Proposition \ref{structural}, we compute LHS in (\ref{structuraltermestimate}). Applying consecutively (\ref{eigensecder}), (\ref{PDEder}), (\ref{secondderexpr}) and (\ref{fineconcavity}) we estimate it as follows:  
\begin{eqnarray}\label{main}  F^{ii}W_{ii} &=&F^{ii} \frac{\lambda_{1,ii}}{\lambda_1} - F^{ii} \frac{\lambda_{1,i}^2}{\lambda_1^2}  \nonumber \\
&\geq & \frac{1}{\lambda_1} \left( F^{ii}\tilde{g}_{ii,11} + 2 F^{ii} \sum_{j>1} \frac{\tilde{g}_{1j,i}^2}{\lambda_1-\lambda_j} \right) - F^{ii} C - F^{ii} \frac{\tilde{g}_{11,i}^2}{\lambda_1^2} \nonumber  \\
&\geq& -\frac{1}{\lambda_1} F^{ij,rs}\tilde{g}_{ij,1}\tilde{g}_{rs,1} + 2 F^{ii} \sum_{j>1} \frac{\tilde{g}_{1j,i}^2}{\lambda_1(\lambda_1-\lambda_j)}- C - F^{ii} \frac{\tilde{g}_{11,i}^2}{\lambda_1^2}  \nonumber \\
&\geq & -\frac{1}{\lambda_1} F^{ii,jj}\tilde{g}_{ii,1}\tilde{g}_{jj,1}- 2 \sum_{i > j} \frac{F^{ii}-F^{jj}}{\lambda_1(\lambda_i-\lambda_j)}\tilde{g}_{ij,1}^2 
 \\ & &+ 2 F^{ii} \sum_{j>1} \frac{\tilde{g}_{1j,i}^2}{\lambda_1(\lambda_1-\lambda_j)}- C - F^{ii} \frac{\tilde{g}_{11,i}^2}{\lambda_1^2}  \nonumber \\
&\geq& 2F^{ii} \frac{\tilde{g}_{ii,1}^2}{\lambda_i\lambda_1} - 2\frac{\left(F^{ii}\tilde{g}_{ii,1}\right)^2}{F\lambda_1} - 2 \sum_{i > j} \frac{F^{ii}-F^{jj}}{\lambda_1(\lambda_i-\lambda_j)}\tilde{g}_{ij,1}^2 
 \nonumber \\  & & + 2 F^{ii} \sum_{j>1} \frac{\tilde{g}_{1j,i}^2}{\lambda_1(\lambda_1-\lambda_j)}- C - F^{ii} \frac{\tilde{g}_{11,i}^2}{\lambda_1^2} \nonumber \\
&\geq & 2F^{ii} \frac{\tilde{g}_{ii,1}^2}{\lambda_i\lambda_1} - 2 \sum_{i > j} \frac{F^{ii}-F^{jj}}{\lambda_1(\lambda_i-\lambda_j)}\tilde{g}_{ij,1}^2 + 2 F^{ii} \sum_{j>1} \frac{\tilde{g}_{1j,i}^2}{\lambda_1(\lambda_1-\lambda_j)}- C - F^{ii} \frac{\tilde{g}_{11,i}^2}{\lambda_1^2}.  \nonumber
\end{eqnarray}

Let us focus at the moment on the term  
\begin{eqnarray} \label{fromlargeig}  I &= &
2F^{ii} \frac{\tilde{g}_{ii,1}^2}{\lambda_i\lambda_1} - 2 \sum_{i > j} \frac{F^{ii}-F^{jj}}{\lambda_1(\lambda_i-\lambda_j)}\tilde{g}_{ij,1}^2 + 2 F^{ii} \sum_{j>1} \frac{\tilde{g}_{1j,i}^2}{\lambda_1(\lambda_1-\lambda_j)} - F^{ii} \frac{\tilde{g}_{11,i}^2}{\lambda_1^2}  \nonumber \\
&=& F^{11} \frac{\tilde{g}_{11,1}^2}{\lambda_1^2}+ 2 \sum_{i>1} F^{ii} \frac{\tilde{g}_{ii,1}^2}{\lambda_i\lambda_1} 
- \sum\limits_{i \not = j; i,j \not = 1} \frac{F^{ii} -F^{jj}}{\lambda_1(\lambda_i-\lambda_j)}\tilde{g}_{ij,1}^2
- 2\sum_{j>1} \frac{F^{11}-F^{jj}}{\lambda_1(\lambda_1-\lambda_j)}\tilde{g}_{j1,1}^2  \nonumber\\
&&+2 F^{ii}\sum_{j>1} \frac{\tilde{g}_{1j,i}^2}{\lambda_1(\lambda_1-\lambda_j)} 
 -\sum_{i>1}F^{ii}\frac{\tilde{g}_{11,i}^2}{\lambda_1^2}  \nonumber \\ &= &
II + III  \end{eqnarray}
where
\begin{eqnarray} \label{positiveunused}  \begin{gathered} II = F^{11} \frac{\tilde{g}_{11,1}^2}{\lambda_1^2}+ 2 \sum_{i>1} F^{ii} \frac{\tilde{g}_{ii,1}^2}{\lambda_i\lambda_1} 
\\ - \sum\limits_{i \not = j; i,j \not = 1} \frac{F^{ii}-F^{jj}}{\lambda_1(\lambda_i-\lambda_j)}\tilde{g}_{ij,1}^2 +2 F^{ii}\sum_{j>1} \frac{\tilde{g}_{1j,i}^2}{\lambda_1(\lambda_1-\lambda_j)}\\
\geq F^{11} \frac{\tilde{g}_{11,1}^2}{\lambda_1^2} \end{gathered}\end{eqnarray}
is going to be a very positive term and, using (\ref{curvature}),
\begin{eqnarray}\label{negthirderr} \begin{gathered} III = 
2\sum_{i>1} \frac{F^{ii}-F^{11}}{\lambda_1(\lambda_1-\lambda_i)}\tilde{g}_{i1,1}^2
- \sum_{i>1}F^{ii}\frac{\tilde{g}_{11,i}^2}{\lambda_1^2} \\
= 2\sum_{i>1} \frac{F^{ii}-F^{11}}{\lambda_1(\lambda_1-\lambda_i)}\tilde{g}_{i1,1}^2
- \sum_{i>1}F^{ii}\frac{\left(\tilde{g}_{i1,1}+(\chi_{11,i}-\chi_{i1,1})+C_{11il}u_l\right)^2}{\lambda_1^2} \\
\geq 2\sum_{i>1} \frac{F^{ii}-F^{11}}{\lambda_1(\lambda_1-\lambda_i)}\tilde{g}_{i1,1}^2
- \sum_{i>1}F^{ii}\frac{\tilde{g}_{i1,1}^2}{\lambda_1^2} \\ 
 - c(\epsilon) \sum_{i>1}F^{ii}\frac{\tilde{g}_{i1,1}^2}{\lambda_1^2}
- C(\epsilon) \sum_{i>1}F^{ii}\frac{1}{\lambda_1^2} \end{gathered}\end{eqnarray}
where $c(\epsilon)$ is as small as we wish in comparison to $\epsilon>0$ at the cost of making $C(\epsilon)$ sufficiently big. 

For every $i>1$ note that
\begin{eqnarray}\label{fii-f11} \begin{gathered} \frac{F^{ii}-F^{11}}{\lambda_1(\lambda_1-\lambda_i)} = \frac{F^2}{\lambda_1(\lambda_1-\lambda_i)}(\frac{1}{\lambda_i^2} - \frac{1}{\lambda_1^2})\\ 
= \frac{F^2(\lambda_1+\lambda_i)}{\lambda_1^3\lambda_i^2}\\
=F^{ii}\frac{(\lambda_1+\lambda_i)}{\lambda_1^3} =F^{ii} \frac{1}{\lambda_1^2}(1+\frac{\lambda_i}{\lambda_1}). \end{gathered}\end{eqnarray}
Thus, after ensuring $c(\epsilon)\leq 1$ in (\ref{negthirderr}), (\ref{fii-f11}) allows one to estimate the term $III$ by
\begin{eqnarray}\label{leftover} \begin{gathered} III \geq - \frac{C}{\lambda_1^2} \mathcal{F}.\end{gathered}\end{eqnarray}

Applying (\ref{positiveunused}) and (\ref{leftover}) in (\ref{main}), we arrive at:
\begin{eqnarray} \label{structuralestimate}  \begin{gathered}  F^{ii}W_{ii} \geq F^{11} \frac{\tilde{g}_{11,1}^2}{\lambda_1^2}-C 
\end{gathered} \end{eqnarray}
as required.
\end{proof}

\section{$C^2$ estimate}
\subsection{New test quantity}
In the proof of the second order estimate Theorem \ref{secondorderestimate} we will consider a new test function 
\begin{eqnarray}\label{newtestfunction}  \tilde{Q}(x) = \log \big( \lambda_1(x)\big) + \sup\limits_{\substack{v \in T_xM, |v|_{g}=1 \\ \tilde{g}(v,v)=\lambda_1}} \phi \left( \frac{1}{2} u_v^2(x) \right).\end{eqnarray}

Suppose the function attains maximum at $x_0$ with a vector realizing maximum being $v_0 \in T_{x_0}M$. It is possible to choose coordinates as in (\ref{eigenbasic}) such that in addition 
\begin{eqnarray}\label{v=1} v_0 = \partial_{x_1}\end{eqnarray} 
at the point $x_0$. Moreover, under the assumption that $\lambda_1$ is of multiplicity one cf. (\ref{eigenvaluescomparison}) below, we extend the vector $v_0$ to 
\begin{eqnarray} \label{localV} V(x)=V^i(x) \partial_{x_i}\end{eqnarray} a smooth local $g$ unit length vector field $V$ of eigenvectors for $\lambda_1$.

Using the vector field $V$ we define locally around $x_0$ new function
\begin{eqnarray}  Q(x) = \log \lambda_1 + \phi \left(\frac{1}{2} u_V^2 \right) =\log \lambda_1 + \phi\left(\frac{1}{2} (V^ju_j)^2 \right). \end{eqnarray}
From the very definition of $\tilde{Q}$, $Q$, $V$ and assumptions on $\phi$ from below we clearly have
\begin{eqnarray}\label{Qcomparison} \tilde{Q} \geq Q\end{eqnarray} and  \begin{eqnarray} \tilde{Q}(x_0) = Q(x_0). \end{eqnarray}
Thus from (\ref{Qcomparison}), under the assumption that $x_0$ was a maximum point for $\tilde{Q}$, also $Q$ attains maximum at $x_0$.

We now assume $\phi'>0$, $\phi''=0$ and compute 
\begin{eqnarray}\label{phisecond2} \begin{gathered} F^{ii} \left( \phi \left( \frac{1}{2} (V^ju_j)^2 \right) \right)_{ii} \\
= F^{ii} \Big(\phi' \left( (V^ju_j)( V^j_iu_j + V^ju_{ji})\right) \Big)_i \\
=  F^{ii} \phi' \Big( ( V^j_iu_j + V^ju_{ji})^2 + (V^ju_j)( V^j_{ii}u_j +V^j_iu_{ji} + V^j_iu_{ji}+V^ju_{jii})\Big)
\\= 
F^{ii} \phi' \Big( ( V^j_iu_j + \delta_1^i \tilde{g}_{1i}-\chi_{1i})^2 + u_1( V^j_{ii}u_j +2V^j_iu_{ji}+u_{1ii})\Big).
\end{gathered} \end{eqnarray} 
In order to express the last quantity in (\ref{phisecond2}) purely in terms of $u$, we compute derivatives of $V$ up to second order explicitly. 

\subsection{First order derivatives of $V$}

From the very definition of $V$, as being an eigenvectorfield for $\lambda_1$, we have
\begin{eqnarray} \begin{gathered}
g^{-1} \circ \tilde{g}(V) = \lambda_1 V.
\end{gathered} \end{eqnarray}
In coordinates, satisfying (\ref{eigenbasic}) and (\ref{v=1}), we get
\begin{eqnarray} \begin{gathered}
\tilde{g}(V,\partial_k) = \lambda_1 g(V,\partial_k)
\end{gathered} \end{eqnarray}
for any fixed $1 \leq k \leq n$. 
Applying the decomposition of $V$ from (\ref{localV}) one obtains
\begin{eqnarray} \label{Veq} \begin{gathered}
V^j\tilde{g}_{jk}= \lambda_1 V^j g_{jk}.
\end{gathered} \end{eqnarray}

Taking the derivative of (\ref{Veq}) for every fixed $1 \leq i \leq n$ gives
\begin{eqnarray} \label{vfirst} \begin{gathered}
V^j_i\tilde{g}_{jk}+V^j\tilde{g}_{jk,i}= \lambda_{1,i} V^j g_{jk}+\lambda_{1} V^j_i g_{jk}.
\end{gathered} \end{eqnarray}
At the point $x_0$ equality (\ref{vfirst}) provides
\begin{eqnarray} \begin{gathered}
V^k_i\tilde{g}_{kk}+\tilde{g}_{1k,i}= \lambda_{1,i} \delta_k^1+\lambda_{1} V^k_i
\end{gathered} \end{eqnarray}
thus
\begin{eqnarray}\label{vfirstder} \begin{gathered}
V^k_i = \frac{\tilde{g}_{1k,i}}{\lambda_1-\lambda_k} 
\end{gathered} \end{eqnarray}
for any $1 \leq i \leq n$ and $k>1$. While differentiating the condition 
\begin{eqnarray}   g(V,V)=V^aV^bg_{ab}=1 \end{eqnarray} 
in $i$'th direction gives
\begin{eqnarray} \label{vlengthder} 2V^lV^l_i=0 \end{eqnarray} 
which at $x_0$ results in
\begin{eqnarray} \label{vfirstder1}  V^1_i=0\end{eqnarray}
for any $1 \leq i \leq n$.

\subsection{Second order derivatives of $V$}

Differentiating formula (\ref{vfirst}) once more,
\begin{eqnarray}\label{vsecond}  \begin{gathered}
V^j_{ii}\tilde{g}_{jk}+V^j_i\tilde{g}_{jk,i}+V^j_i\tilde{g}_{jk,i}+V^j\tilde{g}_{jk,ii}\\
= \lambda_{1,ii} V^jg_{jk}+\lambda_{1,i} V^j_i g_{jk}+\lambda_{1,i} V^j_i g_{jk}+\lambda_{1} V^j_{ii} g_{jk}
\end{gathered} \end{eqnarray} for every $1 \leq i \leq n$. At the point $x_0$ formula (\ref{vsecond}) reduces to
\begin{eqnarray}\label{vsecondder}\begin{gathered}
V^k_{ii}=\frac{1}{\lambda_1-\lambda_k} \left( 2 V^j_i \tilde{g}_{jk,i} + \tilde{g}_{1k,ii}
 - 2 \tilde{g}_{11,i} V^k_i \right)
\end{gathered} \end{eqnarray}
for $k>1$ and any $1 \leq i \leq n$. For $k=1$ and any $1 \leq i \leq n$, after differentiating formula (\ref{vlengthder}) once more in $i$'th direction, we obtain
\begin{eqnarray}\label{vsecondder1}  \begin{gathered}
V^1_{ii}=-\sum\limits_{j=2}^{n} (V^j_i)^2.
\end{gathered} \end{eqnarray}

\subsection{Second order estimate}
\begin{proof}[Proof of Theorem \ref{secondorderestimate}]

Suppose $u$ is a solution of equation (\ref{Jreal}). We consider the test quantity $\tilde{Q}$, assume it achieves maximum at $x_0 \in M$ and construct the function $Q$ as described in Section 3.1. 

We may assume 
\begin{eqnarray}\label{eigenvaluescomparison}  \lambda_1 >> \lambda_2\end{eqnarray} 
at $x_0$ since otherwise, note that from (\ref{Jreal}) $\lambda_2$ is a priori bounded from above, using in addition Proposition \ref{first} we get upper bound for $\tilde{Q}$ at $x_0$ resulting in desired estimate (\ref{nabla2est}). 

Using the extremal equation for $Q$, formulas for derivatives of $V$ plus differentiating equation (\ref{Jreal}) allows us to compute all third order derivatives of $u$, first order derivatives of $\tilde{g}$ as below. 

\medskip

Since $Q$ attain maximum at $x_0$, the following extremal equation holds for any $i\in\{1,2\}$, 
\begin{eqnarray} \label{extrsec} \begin{gathered} 0 = Q_i(x_0)=\frac{\lambda_{1,i}}{\lambda_1}+\phi'\left((V^ju_j)( V^j_iu_j + V^ju_{ji}) \right)\\
=\frac{\tilde{g}_{11,i}}{\lambda_1}+\phi'u_1(V^j_iu_j + u_{1i})=\frac{\tilde{g}_{11,i}}{\lambda_1}+\phi'u_1(V^j_iu_j + \delta_1^i \tilde{g}_{11}-\chi_{1i}). \end{gathered}\end{eqnarray}

(\ref{vfirstder}), (\ref{vfirstder1}), (\ref{vsecondder}),  (\ref{vsecondder1}) and (\ref{extrsec}) yield 
\begin{eqnarray}\label{vder2} \begin{gathered} 
V^2_1 = \frac{\tilde{g}_{12,1}}{\lambda_1-\lambda_2} 
= \frac{\tilde{g}_{11,2}+C}{\lambda_1-\lambda_2}, \\
V^2_2 = \frac{\tilde{g}_{12,2}}{\lambda_1-\lambda_2} 
=\frac{\tilde{g}_{22,1}+C}{\lambda_1-\lambda_2} , \\
V^1_1=V^1_2=0,\\
V^2_{11}=\frac{1}{\lambda_1-\lambda_2} \left( 2 V^2_1 \tilde{g}_{22,1} + \tilde{g}_{12,11}- 2 \tilde{g}_{11,1} V^2_1 \right),\\
V^2_{22}=\frac{1}{\lambda_1-\lambda_2} \left( 2 V^2_2 \tilde{g}_{22,2} + \tilde{g}_{12,22}- 2 \tilde{g}_{11,2} V^2_2 \right),\\
V^1_{11}=-(V^2_1)^2,\\
V^1_{22}=-(V^2_2)^2.
\end{gathered}\end{eqnarray}

Coupling equations from (\ref{extrsec}) for $i \in \{1,2\}$ gives the system:
\begin{eqnarray}\label{extremal2} \begin{gathered} \begin{cases}
\frac{\tilde{g}_{11,1}}{\lambda_1}+\phi'u_1(V^2_1u_2 + \tilde{g}_{11}-\chi_{11})=0\\
\frac{\tilde{g}_{11,2}}{\lambda_1}+\phi'u_1(V^2_2u_2-\chi_{12})=0 \end{cases}\end{gathered}.\end{eqnarray}
Using the formulas for first order derivatives of $V$ from (\ref{vder2}) in the system (\ref{extremal2}) gives
\begin{eqnarray}\label{extremal2c}  \begin{gathered} \begin{cases}
\tilde{g}_{11,1}=-\lambda_1\phi'u_1u_2 \frac{\tilde{g}_{11,2}}{\lambda_1-\lambda_2} -\lambda_1\phi'u_1u_2 \frac{C}{\lambda_1-\lambda_2} -\lambda_1\phi'u_1 \tilde{g}_{11}+\lambda_1\phi'u_1 \chi_{11}\\
\tilde{g}_{11,2}=-\lambda_1 \phi'u_1u_2 \frac{\tilde{g}_{22,1}}{\lambda_1-\lambda_2} -\lambda_1 \phi'u_1u_2 \frac{C}{\lambda_1-\lambda_2}+\lambda_1\phi'u_1 \chi_{12} \end{cases}\end{gathered}.\end{eqnarray}

Differentiating equation (\ref{Jreal}), 
\begin{eqnarray} \label{PDEder2}  \begin{gathered} \begin{cases}
F^{11}\tilde{g}_{11,1}+F^{22}\tilde{g}_{22,1}=f_1\\
F^{11}\tilde{g}_{11,2}+F^{22}\tilde{g}_{22,2}=f_2 \end{cases}.
\end{gathered}\end{eqnarray}
Hence
\begin{eqnarray}\label{PDEder2c} \begin{gathered} \begin{cases}
\tilde{g}_{22,1}=\frac{f_1\lambda_2^2}{f^2} - \frac{\tilde{g}_{11,1}\lambda^2_2}{\lambda_1^2} \\
\tilde{g}_{22,2}=\frac{f_2\lambda_2^2}{f^2} - \frac{\tilde{g}_{11,2}\lambda_2^2}{\lambda_1^2} \end{cases}.
\end{gathered}\end{eqnarray}

Applying first equation of (\ref{PDEder2c}) in the second equation of (\ref{extremal2c}), then the second equation of (\ref{extremal2c}) in first equation of (\ref{extremal2c}) and merging both systems (\ref{extremal2c}) and (\ref{PDEder2c}),
\begin{eqnarray}\label{extremal2cc}\begin{gathered}\quad  \quad   \begin{cases}
\tilde{g}_{11,1}(1+\phi'^2u_1^2u_2^2 \frac{\lambda_2^2}{(\lambda_1-\lambda_2)^2})=\phi'^2 u_1^2 u_2^2 \frac{\lambda_1^2}{(\lambda_1-\lambda_2)^2} (\frac{f_1\lambda_2^2}{f^2} +C) - \phi'^2u_1^2u_2 \frac{\lambda_1^2}{\lambda_1-\lambda_2} \chi_{12} \\ 
-\lambda_1\phi'u_1u_2 \frac{C}{\lambda_1-\lambda_2} -\lambda_1\phi'u_1 \tilde{g}_{11}+\lambda_1\phi'u_1 \chi_{11} \\
\tilde{g}_{11,2}=-\lambda_1 \phi'u_1u_2 \frac{1}{\lambda_1-\lambda_2}(\frac{f_1\lambda_2^2}{f^2} - \frac{\tilde{g}_{11,1}\lambda^2_2}{\lambda_1^2}) -\lambda_1 \phi'u_1u_2 \frac{C}{\lambda_1-\lambda_2}+\lambda_1\phi'u_1 \chi_{12}\\
\tilde{g}_{22,1}=\frac{f_1\lambda_2^2}{f^2} - \frac{\tilde{g}_{11,1}\lambda^2_2}{\lambda_1^2} \\
\tilde{g}_{22,2}=\frac{f_2\lambda_2^2}{f^2} - \frac{\tilde{g}_{11,2}\lambda_2^2}{\lambda_1^2} \end{cases}.\end{gathered}\end{eqnarray}

(\ref{eigenvaluescomparison}), Proposition \ref{first} and (\ref{extremal2cc}) yield the following bounds on derivatives of $\tilde{g}$: 
\begin{eqnarray}\label{gderest}  \begin{gathered}  \begin{cases}
|\tilde{g}_{11,1}|<C \phi' \lambda_1^2, \\ 
|\tilde{g}_{11,2}|<C \phi' \lambda_1, \\ 
|\tilde{g}_{22,1}|<C \phi', \\ 
|\tilde{g}_{22,2}|<C. \end{cases} \end{gathered}\end{eqnarray}
Put (\ref{gderest}) in to (\ref{vder2}),
\begin{eqnarray}\label{vder2est}  \begin{gathered}  \begin{cases}
|V^2_1|<C\phi', \\ 
|V^2_2|<C\phi'\frac{1}{\lambda_1}, \\ 
|V^1_{11}|<C\phi'^2, \\ 
|V^1_{22}|<C\phi'^2 \frac{1}{\lambda_1^2}, \\
\left|\frac{1}{\lambda_1-\lambda_2} \left( 2 V^2_1 \tilde{g}_{22,1} - 2 \tilde{g}_{11,1} V^2_1 \right)\right|<C\phi'^2 \lambda_1,\\
\left|\frac{1}{\lambda_1-\lambda_2} \left( 2 V^2_2 \tilde{g}_{22,2} - 2 \tilde{g}_{11,2} V^2_2 \right)\right|<C\phi'^2 \frac{1}{\lambda_1}. 
 \end{cases}\end{gathered}\end{eqnarray}

Applying formulas (\ref{vder2}) in (\ref{phisecond2}) we obtain:
\begin{eqnarray} \label{phisecond2expl}  \begin{gathered} 
F^{ii} \left( \phi \left( \frac{1}{2} (V^ju_j)^2 \right) \right)_{ii} 
\\ \geq
 F^{11} \phi' \left( ( V^j_1u_j + \tilde{g}_{11}-\chi_{11})^2 + u_1( V^j_{11}u_j +2V^j_1u_{j1}+u_{111})\right) \\
+ F^{22} \phi' u_1( V^j_{22}u_j +2V^j_2u_{j2}+u_{122})\\
= F^{11} \phi' ( V^2_1u_2 + \tilde{g}_{11}-\chi_{11})^2 + \Big[ \phi' u_1 \left(F^{11} u_{111} + F^{22} u_{122} \right)\Big] \\
+ \Big[ \Big[ 2 \phi' u_1(F^{11} V^2_1(-\chi_{21})+F^{22} V^2_2(\tilde{g}_{22}-\chi_{22}))\Big]\Big] 
\\ + \Big[\Big[\phi' u_1^2(F^{11} V^1_{11}+ F^{22} V^1_{22})\Big]\Big] 
+ \Big\{ \frac{\phi' u_1u_2}{\lambda_1-\lambda_2}(F^{11} \tilde{g}_{12,11}+ F^{22}\tilde{g}_{12,22})\Big\} \\
+ \Big[\Big[ \phi' u_1u_2 \left(\frac{F^{11}}{\lambda_1-\lambda_2} \left( 2 V^2_1 \tilde{g}_{22,1} - 2 \tilde{g}_{11,1} V^2_1 \right)+\frac{F^{22}}{\lambda_1-\lambda_2} \left( 2 V^2_2 \tilde{g}_{22,2} - 2 \tilde{g}_{11,2} V^2_2 \right) \right)\Big]\Big] \end{gathered} \end{eqnarray}
where in the last line we have expanded: 
\begin{eqnarray}\label{Vsecondproblematic}\begin{gathered} \phi' u_1u_2(F^{11} V^2_{11}+ F^{22} V^2_{22})=\frac{\phi' u_1u_2}{\lambda_1-\lambda_2}(F^{11} \tilde{g}_{12,11}+ F^{22}\tilde{g}_{12,22}) \\
+ \phi' u_1u_2 \left(\frac{F^{11}}{\lambda_1-\lambda_2} \left( 2 V^2_1 \tilde{g}_{22,1} - 2 \tilde{g}_{11,1} V^2_1 \right)+\frac{F^{22}}{\lambda_1-\lambda_2} \left( 2 V^2_2 \tilde{g}_{22,2} - 2 \tilde{g}_{11,2} V^2_2 \right) \right).
\end{gathered}\end{eqnarray}
Using the above estimations (\ref{vder2est}) in (\ref{phisecond2expl}), Proposition \ref{first}, taking into account (\ref{eigenbasic2}), commuting indices whenever necessary and exploiting (\ref{curvature}), applying formulas of (\ref{PDEder}) we can further estimate:
\begin{eqnarray}\label{phisecond2exp2}  \begin{gathered} 
F^{ii} \left( \phi \left( \frac{1}{2} (V^ju_j)^2 \right) \right)_{ii} 
\geq \frac{F^2}{2} \phi' + \Big[ u_1 \phi' F^{ii}\tilde{g}_{ii,1}\Big] \\
+ \Big\{ \frac{u_1u_2}{\lambda_1-\lambda_2} \phi' F^{ii} \tilde{g}_{ii,12}\Big\} - \Big[ \Big[ C \phi' |u_1|\Big]\Big]\\
\geq \frac{F^2}{2} \phi' -C \phi' |u_1| + \frac{u_1u_2}{\lambda_1-\lambda_2} \phi' (f_{12} - F^{ij,kl}\tilde{g}_{ij,1}\tilde{g}_{kl,2})\\
\geq \frac{F^2}{2} \phi' -C \phi' |u_1| - \frac{u_1u_2}{\lambda_1-\lambda_2} \phi'  F^{ij,kl}\tilde{g}_{ij,1}\tilde{g}_{kl,2}.
\end{gathered} \end{eqnarray}

Using the four estimates from (\ref{gderest}), the formula (\ref{secondderexpr}) for the quantity $F^{ij,kl}\tilde{g}_{ij,1}\tilde{g}_{kl,2}$, expressions (\ref{Fder}) and (\ref{fii-f11}) for $F^{ij,kl}$ we get:
\begin{eqnarray} \label{phisecond2expextr} \begin{gathered} - \frac{u_1u_2}{\lambda_1-\lambda_2} \phi'  F^{ij,kl}\tilde{g}_{ij,1}\tilde{g}_{kl,2} \geq -C \phi' |u_1|.
\end{gathered} \end{eqnarray}
We deduce from (\ref{phisecond2expextr}) and (\ref{phisecond2exp2}) that:
\begin{eqnarray}\label{almost} \begin{gathered} F^{ii} \left( \phi \left( \frac{1}{2} (V^ju_j)^2 \right)\right)_{ii} \geq \frac{F^2}{2} \phi' -C \phi' |u_1|.
\end{gathered} \end{eqnarray}

Finally, applying (\ref{almost}), (\ref{structuraltermestimate}), (\ref{extrsec}) for $i=1$, (\ref{vder2est}) and Proposition \ref{first} we obtain, at the maximum point of $Q$:
\begin{eqnarray}\label{qestremal} \begin{gathered} 0 \geq F^{ii}Q_{ii} 
\geq F^{11} \frac{\tilde{g}_{11,1}^2}{\lambda_1^2} + \frac{F^2}{2} \phi' - C -C \phi' |u_1| \\ 
= F^{11} \phi'^2|u_1|^2(V^2_1u_2 + \tilde{g}_{11}-\chi_{11})^2 + \frac{F^2}{2} \phi' - C -C \phi' |u_1| \\
\geq \frac{F^{2}}{2} \phi'^2|u_1|^2 + \frac{F^2}{2} \phi' - C -C \phi' |u_1| \end{gathered}\end{eqnarray}
which easily gives a contradiction with $\lambda_1$ being arbitrarily large for $\phi'=A$ a sufficiently large constant in comparison to $C$ and $\inf\limits_M f$. \end{proof}

\medskip

\begin{remark} 
The advantage of considering $u_V^2$ in (\ref{newtestfunction}) is that it narrows concentration direction to $u_1$ where $u_{11}= \lambda_1-\chi_{11}\sim \lambda_1$ as $\lambda_1$ is assumed to be large. As such, we can make effective use of Proposition \ref{structural} and critical equation (\ref{extrsec}). Restriction of $n=2$ is used to derive (\ref{gderest}) and (\ref{vder2est}). We would like to point out that vector field $V$ (more precisely $<x,V>$) was used  in \cite{ChHQ16} for interior $C^2$ estimate for Monge-Amp\'ere operator in $\rr^2$.    \end{remark}

\section{Existence of solutions to (\ref{Jreal})}
\begin{proof}[Proof of Theorem \ref{solvability}]
What we present here is a standard consequence of a priori estimates obtained above and in Appendix as well as ellipticity theory for equations of the form (\ref{Jreal}), cf. eg. \cite{D03}.

We note first of all that by using the assumption on existence of admissible $v$ for $\chi$ and considering instead of (\ref{Jrealintegral}) the new equation for $\tilde{u}$
\begin{eqnarray}\label{newJrealinteral}  F(v+\tilde{u})= e^{\int_M \tilde{u} \: vol_g + \left(\int_M v \: vol_g +\Psi\right)}\end{eqnarray}
we can assume $\chi$ is positive definite. This is the case, as if we secure statement of the theorem for (\ref{newJrealinteral}) we will have it, with $u:=v+\tilde{u}$, for (\ref{Jrealintegral}).

Thus from now on we assume $\chi$ is positive definite in (\ref{Jrealintegral}).

Under this assumption we note that for any admissible $u$ solving (\ref{Jrealintegral}) the integral 
\begin{eqnarray} \label{integral}\int_M u \: vol_g\end{eqnarray}
in under control in terms of $\Psi$. Indeed, at the maximum point of $u$ we have
\begin{eqnarray*} 0<g_u = \chi + \nabla^2 u \leq \chi\end{eqnarray*}
resulting in 
\begin{eqnarray*} e^{\int_M u \: vol_g + \Psi} \leq F(0)\end{eqnarray*}
which gives an upper bound for (\ref{integral}). Similarly, at the minimum point of $u$ we get
\begin{eqnarray*}  g_u = \chi + \nabla^2 u \geq \chi > 0\end{eqnarray*}
resulting in 
\begin{eqnarray*}  e^{\int_M u \: vol_g + \Psi} \geq F(0)\end{eqnarray*}
which gives a lower bound for (\ref{integral}).

Let us consider the continuity path of equations:
\begin{eqnarray}\label{continuity} \log \left( F(u_t)\right) = \int_M u_t \: vol_g + t \Psi\end{eqnarray}
for $t \in [0,1]$.

For $t=0$ we have a trivial, constant, solution $u_0$. 

For the openness of the set $T$ of $t$'s for which (\ref{continuity}) is solvable we consider the operator:
\begin{eqnarray*}   C^\infty(M) \supset U \ni u \longmapsto \log\left(F(u)\right)-\int_M u \: vol_g \in C^\infty(M)\end{eqnarray*}
where $U$ is the open cone of admissible $u$'s. Schauder theory for linear elliptic operators implies it is a local diffeomorphism at $u_t$ from which openness follows.  

For the closedness of $T$ we note that Theorem \ref{secondorderestimate}, Propositions \ref{first}
 and \ref{zero} as well as the above bound on $\int_M u_t \: vol_g$, in terms of $t\Psi$, paired with Evans-Krylov estimate \cite{E82, K83} provide a priori $C^{2,\alpha}$ estimate for $u_t$. This results in closedness in $C^{2,\alpha}$ space. The smoothness of the solution follows from a bootstrapping argument.
 
The uniqueness of solution to (\ref{Jrealintegral}) is a standard property. Thus the claim has been proven.
\end{proof}

\medskip

\begin{remark} The second order estimate of Theorem \ref{secondorderestimate} can also be concluded from the interior second order estimate of Heinz \cite{H59}, see also Theorem 9.4.1 in Schulz's book \cite{S90}. Heinz's interior estimate is in dimension two for Monge-Amp\'ere operator of quantity involving derivatives of $u$ up to second order. Estimate of Theorem \ref{secondorderestimate} follows from this result because, as is well known, in dimension two the operator $\frac{\sigma_2}{\sigma_{1}}$ can be rewritten as the Monge-Amp\'ere operator $\sigma_2$ but of eigenvalues of a different endomorphism. This specific interior estimate in dimension two is known to fail in higher dimensions. The merit of our argument is that the first part of it is valid in any dimension showing that the whole reasoning has potential for upgrading to general dimension. The only obstacle is getting control over derivatives of mentioned vector field in new test quantity.      \end{remark}

\begin{remark}
Equation (\ref{general}) on affine manifolds also has been a subject of previous study. In the very particular case of Monge-Amp\'ere operator already by Cheng and Yau \cite{ChY82} in '80s. In this set up one changes the connection in (\ref{classicperturbation}) for the flat connection and the metric $g$ is assumed to be the Hessian metric, i.e. locally being given by the affine Hessian of a function. When performing the maximum principle argument for second order bound for affine version of (\ref{general}) with $F$ being the positive Hessian quotient operator no qualitative difference occurs in comparison with the Riemannian set up. In particular for those equations the second order estimate was not know as well, until recently. In \cite{Lu24}, 
Lu proved interior second order estimate for operators $\frac{\sigma_n}{\sigma_{n-1}}$ and $\frac{\sigma_n}{\sigma_{n-2}}$ in $\rr^n$ for any $n>1$. Lu's interior estimate implies the second order estimate for these two positive Hessian quotient operators in the affine setting. 
\end{remark}

The following conjectures are related to the discussion of this paper.
\begin{conjecture} \label{solvconj}
For $F$ being the positive Hessian quotient operator $\frac{\sigma_n}{\sigma_l}$ for $1 \leq l \leq n-1$, on any connected, closed Riemannian manifold $(M, g)$ endowed with a symmetric $(2,0)$ tensor $\chi$ admitting at least one admissible function $v\in C^4(M)$ in the sense of Definition \ref{def}, the equation (\ref{Jreal}) is solvable for any smooth RHS up to a multiplicative constant. 
\end{conjecture}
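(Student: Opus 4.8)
The plan is to reproduce, for a general positive Hessian quotient operator $F=\frac{\sigma_n}{\sigma_l}$ and in arbitrary dimension $n$, the scheme used to prove Theorems~\ref{secondorderestimate} and~\ref{solvability}. First I would observe that the reduction of Section~4 is dimension free: the $C^0$ and $C^1$ a priori estimates of the Appendix (Propositions~\ref{first} and~\ref{zero}, Urbas' gradient bound) and the bound on $\int_M u_t\, vol_g$ along the continuity path hold for every $n$ and every $l$, so by the Evans--Krylov theorem and a bootstrap the conjecture reduces to the a priori bound $|\nabla^2 u|_g\le C$ for admissible solutions of~(\ref{Jreal}). For the latter one argues by maximum principle at a point $x_0$ where the test quantity
\begin{equation*}
\tilde Q(x)=\log\lambda_1(x)+\sup_{\substack{v\in T_xM,\ |v|_g=1\\ \tilde g(v,v)=\lambda_1}}\phi\!\left(\tfrac12 u_v^2(x)\right)
\end{equation*}
from~(\ref{newtestfunction}) attains its maximum; one chooses coordinates as in~(\ref{eigenbasic}) with the maximizing vector equal to $\partial_1$, passes to the local function $Q=\log\lambda_1+\phi(\tfrac12 u_V^2)$ built from the unit eigenvector field $V$ of $\lambda_1$, and assumes, as one may, that $\lambda_1$ is as large as wished.

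Two dimension-free engines drive the argument. The first is the analogue of Proposition~\ref{structural}: its proof uses only the concavity inequality, and replacing the $\frac{\sigma_n}{\sigma_{n-1}}$-specific identity~(\ref{fineconcavity}) by Proposition~\ref{concavityproperty-k} — which holds for $\frac{\sigma_n}{\sigma_l}$ in all $n$, carries the genuine gains $(1+\epsilon_0)F^{11}\xi_{11}^2/\lambda_1$ and $(1+\delta_0)\sum_{i\ge2}F^{ii}\xi_{i1}^2/\lambda_1$, and whose Cauchy--Schwarz defect $-(F^{ii}\xi_{ii})^2/F$ is absorbed by the once-differentiated equation $F^{ii}\tilde g_{ii,1}=f_1$ — yields $L_F(\log\lambda_1)(x_0)\ge F^{11}\tilde g_{11,1}^2/\lambda_1^2-C$. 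The second is the explicit differentiation of the eigen-equation $V^j\tilde g_{jk}=\lambda_1 V^j g_{jk}$ and of $g(V,V)=1$ once and twice, giving, exactly as in~(\ref{vfirstder})--(\ref{vsecondder1}), $V^k_i=\tilde g_{1k,i}/(\lambda_1-\lambda_k)$ for $k>1$, $V^1_i=0$, $V^1_{ii}=-\sum_{k>1}(V^k_i)^2$, and the corresponding formulas for $V^k_{ii}$. Feeding these into the expansion of $L_F(\phi(\tfrac12 u_V^2))$ produces a good positive term of size $F^{11}\phi'(\tilde g_{11}-\chi_{11}+\ldots)^2\sim F^2\phi'$ (here the advantage of $u_V^2$ is that it localizes the concentration to the direction $\partial_1$, where $u_{11}=\lambda_1-\chi_{11}\sim\lambda_1$), a family of terms linear in $\nabla V$ and $\nabla^2 V$, and a term of the shape $\sum_{k>1}\frac{\phi' u_1u_k}{\lambda_1-\lambda_k}F^{ii}\tilde g_{ik,\cdot\cdot}$ that one turns into lower order by differentiating~(\ref{Jreal}) a second time and applying Proposition~\ref{concavityproperty-k} once more, as in~(\ref{phisecond2exp2}).

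The decisive step, and the one I expect to be the main obstacle, is to bound the first and second derivatives of the eigenvector field $V$ when $n\ge3$. In the two-dimensional proof $V$ has a single nontrivial component $V^2$, and the extremal equations $Q_i(x_0)=0$ coupled with the once-differentiated equation~(\ref{PDEder2}) form a closed $2\times2$ system~(\ref{extremal2cc}) that can be solved for $\tilde g_{11,1},\tilde g_{11,2},\tilde g_{22,1},\tilde g_{22,2}$, whence the hierarchy~(\ref{gderest}) and then~(\ref{vder2est}). For $n\ge3$ the $n$ extremal identities and the $n$ first derivatives of the equation no longer close on the $O(n^3)$ quantities $\tilde g_{ij,k}$, so an extra mechanism is needed; I would try to spend the positive term $II$ of~(\ref{positiveunused}), which is simply discarded in dimension two, together with the $\epsilon_0,\delta_0$-gains of Proposition~\ref{concavityproperty-k}, to dominate the off-diagonal $\tilde g_{1k,\cdot}$ and $\tilde g_{kk,\cdot}$ contributions en bloc rather than estimating each separately. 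A further complication is that when several eigenvalues $\lambda_1,\ldots,\lambda_p$ are comparably large the factors $(\lambda_1-\lambda_k)^{-1}$ are not small and the formulas for $\nabla V$ degenerate; this suggests replacing $\log\lambda_1+\phi(\tfrac12 u_V^2)$ by a quantity symmetric in the top eigenvalue block — for instance $\log(\lambda_1\cdots\lambda_p)+\phi(\tfrac12|P\nabla u|^2)$ with $P$ the orthogonal projection onto the corresponding eigenspace, which is smooth even when the individual eigenvectors are not. Once the derivatives of $V$ (or of $P$) are controlled, assembling everything as in~(\ref{qestremal}) gives at the maximum point an inequality of the form $0\ge c\,F^2\phi'^2u_1^2+\tfrac12 F^2\phi'-C-C\phi'|u_1|$, a contradiction as soon as $\phi'=A$ is chosen large relative to $C$ and $\inf_M f$; this bounds $\lambda_1$ at $x_0$, hence $\tilde Q$ and $\lambda_1$ globally, completing the $C^2$ estimate and with it the conjecture.
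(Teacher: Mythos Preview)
The statement you are attempting to prove is labelled \emph{Conjecture} in the paper: there is no proof to compare against. The paper establishes only the case $n=2$ (Theorems~\ref{secondorderestimate} and~\ref{solvability}) and explicitly leaves the general dimension open; in the remark following the proof of Theorem~\ref{solvability} the author writes that ``the only obstacle is getting control over derivatives of mentioned vector field in new test quantity.''

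Your proposal is an accurate and well-organized summary of the two-dimensional argument and of which parts are dimension-free (the $C^0$ and $C^1$ estimates, the reduction via continuity, Proposition~\ref{structural} once one substitutes Proposition~\ref{concavityproperty-k} for~(\ref{fineconcavity}), and the formulas~(\ref{vfirstder})--(\ref{vsecondder1}) for $\nabla V$ and $\nabla^2 V$). But at the decisive point you do not give a proof: you yourself call the control of $\nabla V$, $\nabla^2 V$ for $n\ge 3$ ``the main obstacle'', note correctly that the $n$ critical equations $Q_i=0$ together with the $n$ first derivatives of the PDE no longer close on the $O(n^3)$ unknowns $\tilde g_{ij,k}$, and then offer only heuristics (``I would try to spend the positive term $II$'', ``this suggests replacing $\log\lambda_1+\phi(\tfrac12 u_V^2)$ by a block-symmetric quantity''). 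None of these suggestions is carried out, and there is no argument that the extra positivity in $II$ or in the $\epsilon_0,\delta_0$ gains of Proposition~\ref{concavityproperty-k} is quantitatively sufficient to absorb the uncontrolled off-diagonal $\tilde g_{1k,\cdot}$ and $\tilde g_{kk,\cdot}$ terms. This is precisely the gap the paper identifies as open; your proposal restates it rather than closes it, so it is not a proof of the conjecture.
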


For general Hessian quotient equation $1\le l<k$,
\begin{equation}\label{g-ke} \frac{\sigma_k}{\sigma_l}(g_u)=f,\end{equation}
we believe that there is no obstruction for $C^2$ estimate if $C^0$ estimate is in hand. 

\begin{conjecture}\label{estconj}
If $u$ is a solution of equation (\ref{g-ke})  on a closed Riemannian manifold $(M,g)$ endowed with a symmetric $(2,0)$ tensor $\chi$, then
\begin{eqnarray}\|u\|_{C^2(M)}\le C(1+\|u\|_{C^0(M)}),\end{eqnarray}
where $C$ depends only on $n, k, l, \chi, f, g$.
\end{conjecture}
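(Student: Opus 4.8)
\medskip

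\noindent\emph{Proof proposal for Conjecture \ref{estconj}.} The plan is to run the maximum principle scheme of Section 3 for the general operator $F=\frac{\sigma_k}{\sigma_l}$ of (\ref{g-ke}) on the cone $\Gamma_k$, isolating the two inputs that are currently tied to $n=2$ and to $\frac{\sigma_2}{\sigma_1}$: the sharp concavity of Proposition \ref{concavityproperty} and the explicit control of $\nabla V$, $\nabla^2V$ in Section 3. Since $\|u\|_{C^0(M)}$ is allowed on the right, one first establishes a gradient estimate $|\nabla u|_g\le C(1+\|u\|_{C^0(M)})$ with $C$ depending only on the background data (an Urbas-type argument, as in the Appendix), after which it suffices to bound the largest eigenvalue $\lambda_1$ of $g^{-1}\circ g_u$ at its maximum over $M$, with $C$ now allowed to depend in addition on $\sup_M|\nabla u|_g$ and on $n,k,l,\chi,f,g$.

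For the first ingredient one uses that $(\sigma_k/\sigma_l)^{1/(k-l)}$ is concave on $\Gamma_k$ and, more to the point, the refined concavity inequality for ratios of elementary symmetric functions of the kind established in \cite{GM03, GLZ09}, which is the source cited for Proposition \ref{concavityproperty-k}. Feeding its $\frac{\sigma_k}{\sigma_l}$-analogue into the computation (\ref{main}), together with (\ref{eigensecder}), the twice-differentiated equation (\ref{PDEder}), the decomposition (\ref{secondderexpr}), and the structural and ellipticity properties of $F$ on $\Gamma_k$ (the general-cone counterparts of (\ref{eigenbasic2})), should yield the exact analogue of Proposition \ref{structural}, namely $L_F(\log\lambda_1)(x_0)\ge\epsilon_0 F^{11}\tilde{g}_{11,1}^2/\lambda_1^2-C$ for some $\epsilon_0=\epsilon_0(n,k,l)>0$. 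Verifying the bounds on $F^{ii}$ and $\mathcal{F}$ on $\Gamma_k$ and the precise form of the refined concavity is routine but genuinely needs the general symmetric-function machinery, not the one-line computation available for $\frac{\sigma_2}{\sigma_1}$ in Proposition \ref{concavityproperty}.

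For the second ingredient one keeps the test quantity $\tilde{Q}$ of (\ref{newtestfunction}) (adding, if needed, a term of Hou--Ma--Wu type $\psi(|\nabla u|_g^2)$ to absorb gradient errors), chooses coordinates (\ref{eigenbasic}), (\ref{v=1}) at a maximum point $x_0$, and extends $v_0$ to a local unit eigenvector field $V$ for $\lambda_1$. The formulas (\ref{vfirstder}), (\ref{vsecondder}) for $\nabla V$ and $\nabla^2V$ are dimension-free, but they carry the factors $(\lambda_1-\lambda_j)^{-1}$ and are only useful when these are $O(\lambda_1^{-1})$, i.e. when $\lambda_1$ strictly dominates the remaining spectrum; in dimension two this holds for free since $\lambda_2$ is a priori bounded, whereas for $k\ge 3$ a whole cluster of eigenvalues may be of size $\lambda_1$. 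The proposal is to split, at $x_0$, the spectrum into the block $B$ of eigenvalues within a fixed ratio of $\lambda_1$ and its complement: terms with index outside $B$ are controlled directly, while for indices in $B$ one replaces $V$ by an orthonormal eigenframe of the $B$-eigenspace and works with the matrix-valued analogues of (\ref{vfirstder}), (\ref{vsecondder}), using that the block-diagonal part of $\tilde{g}$ varies slowly along its own eigenspace. One then runs the computation (\ref{phisecond2})--(\ref{almost}) blockwise and combines the resulting positive term with the good term $\epsilon_0 F^{11}\tilde{g}_{11,1}^2/\lambda_1^2$ and with the critical equation (\ref{extrsec})---which converts the latter into a term $\ge c\,\phi'^2u_1^2F^2$ once $\lambda_1$ is large---to produce, as in (\ref{qestremal}), a quantity quadratic in $\phi'u_1$ that dominates the lower order terms, a contradiction once $\phi'\equiv A$ is a sufficiently large constant.

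The main obstacle is precisely the one flagged in the remark after Section 3: obtaining usable bounds on the first and, above all, the second covariant derivatives of the eigenvector field (or eigenframe) $V$ without a uniform spectral gap, and bookkeeping the errors so that each is $o(\lambda_1^2)\,\mathcal{F}$ or $o(\phi'^2u_1^2)\,F^{11}$ and hence absorbed. The block-splitting is the natural device, but making it quantitative---controlling how the $B$-eigenspace and the complementary spectral projection rotate, which is what governs $\nabla V$ and $\nabla^2V$---is the real content, and in higher dimensions the combinatorics in (\ref{secondderexpr}) and in the refined concavity become heavy. If this can be pushed through, the scheme proves Conjecture \ref{estconj}; combined with a $C^0$ bound, which for $F=\frac{\sigma_n}{\sigma_l}$ comes for free (cf. the Appendix), it also gives Conjecture \ref{solvconj}.
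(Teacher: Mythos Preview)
The statement you are addressing is labeled \emph{Conjecture} in the paper, and the paper gives no proof of it; it is presented as open, with the authors writing that they ``believe that there is no obstruction for $C^2$ estimate if $C^0$ estimate is in hand.'' So there is no ``paper's own proof'' to compare against, and what you have written is explicitly a strategy rather than a proof: you write ``should yield,'' ``the proposal is to,'' and close with ``if this can be pushed through.''

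Viewed as a plan, your outline is reasonable and tracks the paper's own hints: you correctly isolate the two ingredients (refined concavity replacing Proposition \ref{concavityproperty}, and control of the eigenvector field $V$ beyond $n=2$), and your proposed block-splitting of the spectrum is a natural idea. But neither ingredient is actually established. The refined concavity you need for general $\frac{\sigma_k}{\sigma_l}$ is not the statement of Proposition \ref{concavityproperty-k} (which is for $\frac{\sigma_n}{\sigma_k}$), and you do not prove its $\Gamma_k$-analogue. More seriously, the entire Section 3 argument hinges on the explicit solution (\ref{extremal2cc})--(\ref{gderest}) of a $2\times 2$ linear system coupling the critical equations and the differentiated PDE; in higher dimensions this becomes an underdetermined system with $(\lambda_1-\lambda_j)^{-1}$ coefficients, and your block-splitting does not explain how to obtain the quantitative bounds (\ref{gderest}) and (\ref{vder2est}) without a spectral gap. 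You flag this yourself as ``the main obstacle,'' which is exactly the point the paper makes in the remark after Section 3. In short: the proposal identifies the right difficulties but does not resolve them, so it does not constitute a proof of the conjecture.
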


\section{Appendix: $C^0$ and $C^1$ estimates revised}

\subsection{$C^0$ estimate}

The following estimate uses Cheng and Yau's argument from \cite{ChY82}.
\begin{proposition} \label{zero}
For a smooth function $u$ on $(M,g,\chi)$ such that \begin{eqnarray*}\chi+\nabla^2 u > 0\end{eqnarray*} the following bound holds: \begin{eqnarray*}osc(u) \leq C(\chi,g) \frac{(\diam_g M)^2}{2}.\end{eqnarray*}
\end{proposition}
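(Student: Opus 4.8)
The plan is to bound the oscillation of $u$ via the classical Cheng--Yau trick of integrating $\Delta_g u$ against a carefully chosen test function, exploiting only the admissibility condition $\chi + \nabla^2 u > 0$ together with the upper bound $\chi \leq C(\chi,g) g$ from \eqref{gbaounds}. The starting observation is that admissibility gives a pointwise lower bound on the ordinary Laplacian: taking the trace with respect to $g$ of $0 < \chi + \nabla^2 u \leq C(\chi,g) g + \nabla^2 u$ yields
\begin{eqnarray*} \Delta_g u = \tr_g(\nabla^2 u) > -\tr_g \chi \geq -n\, C(\chi,g) \end{eqnarray*}
at every point of $M$, so $\Delta_g u$ is bounded below by a constant depending only on $\chi$ and $g$, with no reference to the equation \eqref{Jreal} itself.

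First I would fix the point $x_0 \in M$ where $u$ attains its maximum and work on the geodesic ball $B = B_g(x_0, d)$ where $d = \diam_g M$, so that $B = M$. The idea is to compare $u$ with the function $\tfrac{1}{2} C(\chi,g)\, \rho(x)^2$ where $\rho(x) = \operatorname{dist}_g(x, x_0)$; more robustly, one can argue via the maximum principle applied to an auxiliary function. Set $w = u - u(x_0)$, so $w \leq 0$ with $w(x_0) = 0$, and consider the comparison with a function whose Laplacian matches the lower bound. The cleanest route is: since $\Delta_g u \geq -n\,C(\chi,g)$ weakly on $M$, the function $u + n\,C(\chi,g)\,\psi$ is subharmonic wherever $\Delta_g \psi = 1$ — but on a closed manifold such $\psi$ need not be globally defined, so instead I would localize using the distance function and the Laplacian comparison theorem, or, more elementarily, run the argument as in Cheng--Yau by integrating along minimizing geodesics from $x_0$: for any $x$, along a unit-speed minimizing geodesic $\gamma$ from $x_0$ to $x$ one estimates $u(x_0) - u(x)$ by a double integration of the second derivative $\tfrac{d^2}{dt^2}(u\circ\gamma) = \nabla^2 u(\dot\gamma,\dot\gamma) \geq -\chi(\dot\gamma,\dot\gamma) \geq -C(\chi,g)$ along $\gamma$, together with the fact that $\tfrac{d}{dt}(u\circ\gamma)\big|_{t=0} \leq 0$ at the maximum (more precisely, one handles the first-derivative term by instead comparing at the minimum point or by a symmetrized argument).

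Carrying this out: let $y_0$ be a minimum point of $u$ and let $\gamma:[0,\ell]\to M$ be a unit-speed minimizing geodesic from $x_0$ to $y_0$, with $\ell \leq \diam_g M$. Writing $h(t) = u(\gamma(t))$ we have $h''(t) = \nabla^2 u(\dot\gamma(t),\dot\gamma(t)) \geq -\chi(\dot\gamma(t),\dot\gamma(t)) \geq -C(\chi,g)$; since $h$ attains its maximum at $t=0$ and its minimum at $t=\ell$, we get $h'(0) \leq 0$ and $h'(\ell) \geq 0$, hence $0 \leq h'(\ell) - h'(0) = \int_0^\ell h''(t)\,dt$ gives no contradiction directly, so instead integrate once more: $h(0) - h(\ell) = -\int_0^\ell h'(t)\,dt$, and $h'(t) = h'(\ell) - \int_t^\ell h''(s)\,ds \geq 0 - (\ell - t)\,C(\chi,g)$, so $h(0) - h(\ell) \leq \int_0^\ell (\ell - t)\,C(\chi,g)\,dt = C(\chi,g)\,\tfrac{\ell^2}{2} \leq C(\chi,g)\,\tfrac{(\diam_g M)^2}{2}$. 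Since $h(0) - h(\ell) = \max_M u - \min_M u = \operatorname{osc}(u)$, this is exactly the claimed bound.

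The main obstacle, such as it is, is purely bookkeeping: one must be careful about the direction of the first-derivative inequality at the endpoints (this is why choosing $x_0$ to be the max and $y_0$ the min, and integrating from the min-side, makes the sign work out cleanly), and about the fact that a minimizing geodesic between max and min points has length at most $\diam_g M$ — both are routine. There is no genuine difficulty; the estimate is entirely elementary and, as the paper notes, free of any dependence on the equation, relying only on admissibility and the fixed bound $\chi \leq C(\chi,g)\,g$.
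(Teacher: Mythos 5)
Your approach is the same as the paper's: double integration of $(u\circ\gamma)'' = \nabla^2 u(\dot\gamma,\dot\gamma) \geq -C(\chi,g)$ along a unit-speed minimizing geodesic joining the extremal points of $u$, with the boundary first-derivative term eliminated because the extremum is an interior critical point on the closed manifold. However, the final displayed computation contains a sign error that invalidates it as written. You claim
\begin{eqnarray*}
h'(t) = h'(\ell) - \int_t^\ell h''(s)\,ds \;\geq\; 0 - (\ell-t)\,C(\chi,g),
\end{eqnarray*}
but from $h'' \geq -C(\chi,g)$ you only get $\int_t^\ell h'' \geq -(\ell-t)C(\chi,g)$, hence $-\int_t^\ell h'' \leq (\ell-t)C(\chi,g)$, which together with $h'(\ell)\geq 0$ yields the \emph{upper} bound $h'(t) \leq (\ell-t)C(\chi,g)$; that is useless for bounding $h(0)-h(\ell) = -\int_0^\ell h'$ from above, and the inequality you wrote simply does not follow. (Also, the justification ``$h$ has a minimum at the right endpoint $t=\ell$, so $h'(\ell)\geq 0$'' is incorrect — a right-endpoint minimum gives $h'(\ell)\leq 0$; the true reason $h'(\ell)=0$ is that $\gamma(\ell)$ is a global, hence interior, minimum of $u$ on the closed manifold $M$, so $\nabla u=0$ there.) The fix, which is exactly what the paper does, is to anchor the fundamental-theorem expansion at the \emph{maximum} endpoint $t=0$, where likewise $h'(0)=0$: then $h'(t) = \int_0^t h''(s)\,ds \geq -C(\chi,g)\,t$, so
\begin{eqnarray*}
h(0)-h(\ell) = -\int_0^\ell h'(t)\,dt \;\leq\; \int_0^\ell C(\chi,g)\,t\,dt = C(\chi,g)\,\frac{\ell^2}{2} \;\leq\; C(\chi,g)\,\frac{(\diam_g M)^2}{2}.
\end{eqnarray*}
With that one-line correction the proof is complete and coincides with the paper's argument.
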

\begin{proof}
Let \begin{eqnarray*}M=u(x_1)=\sup_M u, \: m=u(x_2)= \inf_M u\end{eqnarray*} and $\gamma$ the unit speed geodesic for $g$ on the interval $[s_1,s_2]$ between $x_1$ and $x_2$. 

Note that
\begin{eqnarray*}\begin{gathered} m-M = \int_{s_1}^{s_2} (u(\gamma(s)))_s ds = \int_{s_1}^{s_2} \int_{s_1}^{s}(u(\gamma(t)))_{tt} dt ds \\
=\int_{s_1}^{s_2} \int_{s_1}^{s} u_{ij}\left(\gamma(t)\right)\gamma_t^i(t)\gamma^j_t(t) dtds \\
\geq \int_{s_1}^{s_2} \int_{s_1}^{s} -\chi_{ij}\left(\gamma(t)\right)\gamma_t^i(t)\gamma^j_t(t) dtds \\
\geq C(\chi,g)\int_{s_1}^{s_2} \int_{s_1}^{s} -g_{ij}\left(\gamma(t)\right)\gamma_t^i(t)\gamma^j_t(t) dtds\\
=-C(\chi,g)\int_{s_1}^{s_2} \int_{s_1}^{s} |\nabla \gamma(t)|^2_g dtds 
=-C(\chi,g)\int_{s_1}^{s_2} \int_{s_1}^{s} 1 dtds \\
= -C(\chi,g)\int_{s_1}^{s_2} (s-s_1)ds = -C(\chi,g)\frac{(s_2-s_1)^2}{2} \geq -C(\chi,g)\frac{(\diam_g M)^2}{2}.\end{gathered}\end{eqnarray*} 
In the above: lower indices denote usual derivatives, the second equality follows from $x_1, \: x_2$ being extremal points of $u$, third one holds in normal coordinates for $g$ around point $\gamma(t)$ and the following one from the assumption on the Hessian of $u$ dominating $-\chi$. The claim follows from this inequality.  \end{proof}

\subsection{$C^1$ estimate}

Explicit gradient estimate is available for a wide class of equations of the form (\ref{general}), see e.g. \cite{U02}. 
For solutions in the positive cone case there is a simple proof due to Delano\"e \cite{D81a}. As we are not able to find a reference exactly for (\ref{Jreal}), we present it here.

\begin{proposition} \label{first}
For a smooth function $u$ on $(M,g,\chi)$ such that \begin{eqnarray*}\chi+\nabla^2 u > 0\end{eqnarray*} the following bound holds:\begin{eqnarray*} |\nabla u|^2_g \leq \left( C(\chi,g) \diam_g M \right)^2.\end{eqnarray*}
\end{proposition}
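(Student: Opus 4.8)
The plan is to adapt Delano\"e's maximum principle argument for the positive cone case. Consider the test function $w = \tfrac{1}{2}|\nabla u|_g^2$ on $M$; since $M$ is closed, $w$ attains its maximum at some $x_0 \in M$. I would work in normal coordinates for $g$ centered at $x_0$, so that at $x_0$ one has $g_{ij} = \delta_{ij}$ and the first derivatives of $g_{ij}$ vanish. The goal is to bound $|\nabla u|_g(x_0)$ and thereby $|\nabla u|_g$ everywhere.

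First I would write down the extremal conditions at $x_0$. The first-order condition $w_i(x_0) = 0$ gives $u_k u_{ki} = 0$ for each $i$ (using covariant derivatives, which agree with ordinary ones at $x_0$ in normal coordinates up to curvature terms that don't affect first derivatives). The second-order condition is that the Hessian of $w$ is negative semidefinite, so $F^{ii} w_{ii}(x_0) \le 0$ for the linearized operator associated to $F$. Expanding, $w_{ii} = u_{ki}u_{ki} + u_k u_{kii}$, and commuting derivatives introduces curvature terms bounded by $C|\nabla u|_g^2$. Contracting with $F^{ii}$ and using that $F^{ii} u_{ki} u_{ki} \ge 0$, I get roughly $0 \ge F^{ii} u_k u_{kii} - C\mathcal{F}|\nabla u|_g^2$.

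Next I would differentiate the equation (\ref{Jreal}) once in the direction $\partial_k$ and multiply by $u_k$: differentiating $F(g^{-1}\circ g_u) = f$ gives $F^{ii}\tilde{g}_{ii,k} = f_k$, i.e. $F^{ii}(\chi_{ii,k} + u_{iik}) = f_k$, so $F^{ii} u_k u_{iik} = u_k f_k - u_k F^{ii}\chi_{ii,k}$. After commuting indices in $u_{iik}$ versus $u_{kii}$ (again curvature error terms), substituting this into the second-order inequality yields $0 \ge u_k f_k - C\mathcal{F}|\nabla u|_g^2 - C|\nabla u|_g$, where I have used $\tfrac{1}{n}\le \mathcal{F}\le 1$ from (\ref{eigenbasic2}) together with the bound $-c(\chi,g)g \le \chi$. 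The key point making this work in the positive cone is precisely that $\mathcal{F}$ is bounded, so the "bad" term $C\mathcal{F}|\nabla u|_g^2$ is genuinely of order $|\nabla u|_g^2$ and not worse.

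The main obstacle — and where Delano\"e's trick enters — is that the inequality above is the wrong sign to close directly: it bounds a lower-order expression, not $|\nabla u|_g^2$ from above. The standard fix is to instead use a test function of the form $w = \tfrac{1}{2}|\nabla u|_g^2 e^{\psi(u)}$ or $w = \tfrac{1}{2}|\nabla u|_g^2 + A u$ with a suitable auxiliary function, or alternatively to integrate along geodesics as in the proof of Proposition \ref{zero}: writing $u(\gamma(s_2)) - u(\gamma(s_1)) = \int \langle \nabla u, \dot\gamma\rangle$ and using the $C^0$ bound plus the Hessian lower bound $\nabla^2 u \ge -\chi \ge -C(\chi,g)g$ to control $\langle \nabla u, \dot\gamma\rangle$ at the maximum of $|\nabla u|_g$. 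Concretely, I expect the cleanest route mirrors Proposition \ref{zero}: fix the point $x_0$ where $|\nabla u|_g$ is maximal, let $v = \nabla u(x_0)/|\nabla u|_g(x_0)$, and run the geodesic $\gamma$ from $x_0$ in direction $v$; then $\tfrac{d}{ds}u(\gamma(s))\big|_{s=0} = |\nabla u|_g(x_0)$, while $\tfrac{d^2}{ds^2}u(\gamma(s)) = \nabla^2 u(\dot\gamma,\dot\gamma) \ge -C(\chi,g)$, so integrating and using $\mathrm{osc}(u) \le C(\chi,g)(\diam_g M)^2/2$ from Proposition \ref{zero} forces $|\nabla u|_g(x_0) \cdot \ell - C(\chi,g)\ell^2/2 \le \mathrm{osc}(u)$ for $\ell$ up to $\diam_g M$, which after optimizing in $\ell$ gives $|\nabla u|_g(x_0) \le C(\chi,g)\diam_g M$. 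This avoids the PDE entirely and uses only the Hessian lower bound, so the hard part is really just bookkeeping the constants; no genuine difficulty remains beyond that.
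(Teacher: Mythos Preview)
Your final argument (the geodesic integration) is correct and gives exactly the stated bound: with $h(s)=u(\gamma(s))$ along a unit-speed geodesic from the maximum point of $|\nabla u|_g$ in the gradient direction, $h''(s)=\nabla^2u(\dot\gamma,\dot\gamma)\ge -C(\chi,g)$, so $h(\ell)-h(0)\ge |\nabla u|_g(x_0)\,\ell-C(\chi,g)\ell^2/2$; optimizing in $\ell$ (geodesics exist for all time on a closed manifold) and invoking Proposition~\ref{zero} yields $|\nabla u|_g(x_0)\le C(\chi,g)\diam_gM$.

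The paper, however, carries out precisely the option you mentioned and set aside, namely $W=\tfrac12|\nabla u|_g^2+Au$ with $A=C(\chi,g)$, and the point is that only the \emph{first}-order critical condition is needed. At the maximum of $W$ one works in coordinates that simultaneously diagonalize $g$ and $g_u$ (not merely normal coordinates), so that $u_{ij}=u_{ii}\delta_{ij}$ at $x_0$; then $W_i=(C(\chi,g)+u_{ii})u_i=0$ for each $i$, and since $C(\chi,g)+u_{ii}\ge\chi_{ii}+u_{ii}>0$ this forces $\nabla u(x_0)=0$. Hence $\sup_M W\le C(\chi,g)\operatorname{osc}(u)$, and Proposition~\ref{zero} finishes. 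No second-order inequality, no differentiation of the equation, no integration. Your initial set-up (second-order conditions plus differentiating the PDE) is unnecessary and, as you noted, does not close by itself; the paper's route and your geodesic route both sidestep it entirely. The paper's argument is a bit slicker (pure pointwise algebra), while yours is perhaps more geometrically transparent; both use only the Hessian lower bound and are independent of the equation.
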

\begin{proof}
Take \begin{eqnarray*}W = C(\chi,g)(u-\inf_M u) + \frac{|\nabla u|^2_g}{2}= C(\chi,g)(u-\inf_M u) + \frac 1 2 g^{ij}u_iu_j.\end{eqnarray*}
At the maximum of $W$, for every $i$ separately, in coordinates (\ref{eigenbasic}) simultaneously diagonalizing $g$ and $g_u$ we get 
\begin{eqnarray*}W_i = C(\chi,g) u_{i}+ u_{ii}u_i= (C(\chi,g)+u_{ii}) \cdot (u_i) = 0\end{eqnarray*} since \begin{eqnarray*}C(\chi,g)+u_{ii} \geq \chi_{ii} + u_{ii} > 0\end{eqnarray*} this implies \begin{eqnarray*}\nabla u = 0.\end{eqnarray*} It thus follows that \begin{eqnarray*}\sup\limits_M W = \sup\limits_M C(\chi,g)(u-\inf_M u) \leq \left( C(\chi,g) \right)^2 \frac{\left(\diam_g M\right)^2}{2}.\end{eqnarray*}

This in turn means \begin{eqnarray*} \sup\limits_M \frac{|\nabla u|^2_g}{2} = \sup\limits_M (W -u+\inf_M u) \leq \sup\limits_M W \leq \left( C(\chi,g) \right)^2  \frac{(\diam_g M)^2}{2} \end{eqnarray*} which gives the claim. 
\end{proof}

\end{document}